\newtheorem{theorem}{Theorem}[section]
\newtheorem{corollary}[theorem]{Corollary}
\newtheorem{lemma}[theorem]{Lemma}
\newtheorem{proposition}[theorem]{Proposition}
\newtheorem{conjecture}[theorem]{Conjecture}
\theoremstyle{definition}
\newtheorem{definition}[theorem]{Definition}
\newtheorem{remark}[theorem]{Remark}
\newtheorem{example}[theorem]{Example}
\newcommand{\R}{\mbox{$\mathbf R$}}
\newcommand{\C}{\mbox{$\mathcal C$}}
\newcommand{\w}{\mathcal{W}}
\newcommand{\ww}{\mathbf{w}}
\renewcommand{\v}[1]{{#1}}
\newcommand{\ca}{\gamma}
\newcommand{\eca}{\gamma'}
\newcommand{\epc}{\gamma_0}
\begin{document}
\title{A geometric and combinatorial view of weighted voting}
\address{Department of Mathematics, Wake Forest University, Winston-Salem, NC 27109}
\author{Sarah Mason}
\email{masonsk@wfu.edu}

\author{Jason Parsley}
\email{parslerj@wfu.edu}

\date{\today}

\subjclass[2010]{Primary: 91A65; Secondary: 52B05}
\keywords{weighted voting, posets, filters, convex polytopes}

\maketitle

\begin{abstract}
 A natural partial ordering exists on the set of all weighted games and, more broadly, on all linear games.  We describe several properties of the partially ordered sets formed by these games  and utilize this perspective to enumerate proper linear games with one generator.  We introduce a geometric approach to weighted voting by considering the convex polytope of all possible realizations of a weighted game and connect this geometric perspective to the weighted games poset in several ways.  In particular, we prove that generic vertical lines in $\C_n$, the union of all weighted $n$-player polytopes, correspond to maximal saturated chains in the poset of weighted games, i.e., the poset is a blueprint for how the polytopes fit together to form $\C_n$.  We show how to compare the relationships between the powers of the players using the polytope directly.  Finally, we describe the facets of each polytope, from which we develop a method for determining the weightedness of any linear game that covers or is covered by a weighted game.
\end{abstract}

\section{Introduction}
\emph{Weighted voting} refers to the situation where $n$ players, each with a certain weight, vote on a yes or no motion.  For one side to win, the weights of its players (frequently referred to as players) must reach a certain fixed quota $q$.  A natural example is a corporation:  each stockholder is a player with weight equal to the shares of stock he or she owns.  The goal of this article is to describe certain combinatorial and geometric structures of weighted voting and to detail the connections between these viewpoints.  

Weighted voting forms an important class of \emph{simple games} (cf.\  Definition~\ref{def:simple}), whose framework offers several different interpretations.  Simple games may be viewed as  hypergraphs~\cite{GilThesis,Isb58} and also as logic gates~\cite{MTB70, Mur71}; in these situations, weighted voting corresponds to threshold graphs and threshold functions, respectively.  We suggest the excellent book~\cite{TZ99} as a first reference on simple games for the nonexpert reader.

When the players in a simple game are totally ordered, the game is called \emph{linear}~\cite{FrePon10} (or \emph{directed}~\cite{KS95} or \emph{complete}~\cite{FrePue08}).  All weighted games are linear (but not vice-versa), since the players' weights provide a natural ordering.  Taylor and Zwicker~\cite{TZ92} have characterized linear games via swap robustness and weighted games via trade robustness; we utilize this trading approach in Section~\ref{sec:poset}.

A total ordering on the players in a linear game naturally leads to a partial ordering on coalitions.   
For $n$ players, the coalitions form the well-known partially ordered set (\emph{poset}) denoted $M(n)$.  Stanley has shown~\cite{Sta80} that this poset is rank-unimodal and exhibits the Sperner property.  
Linear games are in one-to-one correspondence with the filters of $M(n)$, which form the filtration poset $J(M(n))$ or $J_n$.  The generators of the filter corresponding to a game are its shift-minimal winning coalitions.  Krohn and Sudh\"{o}lter~\cite{KS95} introduce this filtration partial ordering on simple linear games and weighted games and investigate several consequences of the Sperner property.  They then use linear programming methods to obtain efficient algorithms which test whether or not a linear game is weighted.  As a subposet of $J_n$, we construct the posets $\w_n$ of all weighted games and $\Pi_n$ of all proper linear games in the style of Krohn and Sudh\"{o}lter.  All of these posets are symmetric, ranked lattices.   We prove in Proposition~\ref{prop:induced} that the weighted games poset $\w_n$ is an induced subposet of $J_n$.
Figure~\ref{fig:poset4} depicts $M(3), M(4)$, and the top half of $J_4$.   

In Section~\ref{sec:geometry}, we introduce a geometric approach to weighted games.  By scaling the weights to sum to one, we define the simplex of normalized weights $\Delta_n$ and the configuration region $\C_n = (0,1] \times \Delta_n$, which depicts all realizations of $n$-player weighted games in quota-weight space.   Each coalition corresponds to a half-space intersecting $\C_n$; each weighted game corresponds to the convex polytope in $\C_n$ formed by intersecting the half-spaces arising from all coalitions winning in that game.   We show in Section~\ref{sec:polytope} that each polytope is indeed convex, $n$-dimensional, closed on the top and side facets, and open on the bottom facets.  

This geometric approach is quite different from the classical `separating hyperplanes' approach, in which coalitions represent vertices of the $n$-dimensional unit cube.  A linear game is weighted if the sets of vertices which correspond to its winning coalitions may be separated by appropriate hyperplanes from the remaining vertices (the losing coalitions) of the cube.  See~\cite{EinLehRegular, Mur71, TZ99} for details.

Players can be broken into equivalence classes of equally desirable players, called \emph{hierarchies}.  Each possible hierarchy of players is associated to a subsimplex of $\Delta_n$.   We show in Theorem~\ref{thm:hierarchy} that the hierarchy for a weighted game corresponds to the smallest subsimplex onto which its polytope projects.  As a corollary, we characterize symmetric games as the only ones that project onto vertices of $\Delta_n$.  

Theorem~\ref{thm:satchain}, the first of two main results of this article, connects the above geometric approach to the poset of weighted games.  We show that for a generic choice of weights, \emph{moving vertically through the configuration region traverses a maximal saturated chain in $\w_n$}.  In other words, we may view the polytopes for $n$ players as building blocks, and the ordering in $\w_n$ provides instructions on how to stack them so as to construct $\C_n$.  Every vertical line in the configuration region therefore describes a saturated chain in the poset $\w_n$.  However, not every saturated chain in the poset $\w_n$ corresponds to a vertical line in $\C_n$.  That is, there exist saturated chains in $\w_n$ that cannot be realized by fixing a collection of weights and varying the quota.  We conjecture a necessary and sufficient condition, \emph{inequality robustness} to determine whether or not a saturated chain can be realized through fixed weights; see Conjecture~\ref{conj:chain}.  

In Theorem~\ref{thm:facets}, we describe the correlation between facets, hierarchies, and posets.  Furthermore, we prove that a weighted game's polytope has $n-k+d$ facets, where $k$ is the number of nontrivial symmetry classes of players in the game and $d$ is the degree of the game as a vertex in $\w_n$.  

The second main result of this article, Theorem~\ref{thm:method}, provides a method for determining the weightedness of a linear game covering or covered by a weighted game in $J_n$.  This method reduces to a linear programming problem which is different and possibly simpler than the standard linear programming approaches (cf.\ \cite{FK11, KS95}) for determining weightedness.

This article is organized as follows.  Section~\ref{sec:background} describes the relevant background on weighted and simple games and assumes little expertise with voting theory.  Section~\ref{sec:poset} contains our combinatorial approach via the partial orderings on coalitions, linear games, and weighted games.  In Section~\ref{sec:geometry}, we detail the geometry of weighted games and its connections to the aforementioned posets.  We conclude by describing relevant open problems and future work in Section~\ref{sec:future}.

\section{Background}  \label{sec:background}

\subsection{Weighted games}

Weighted games, also known as \emph{weighted voting systems}, belong to a much larger class known as \emph{simple games}.  To understand them, we require some preliminary definitions.

Throughout this paper we restrict to a finite set of $n$ players (or voters), who vote yes or no on a motion.  The set of players who vote the same on a given motion is known as a \emph{coalition}.  The set $[n]=\{1,2,\ldots, n\}$ of all players is called the \emph{grand coalition}.  In weighted voting, a coalition is \emph{winning} if the sum of its weights is greater than or equal to the quota.  A \emph{minimal winning coalition} is one possessing no winning coalition as a proper subset; if any player leaves such a coalition the resulting coalition will no longer be winning.  A \emph{dictator} has weight greater than or equal to the quota.  A \emph{dummy} is a player appearing in no minimal winning coalitions.

A first, straightforward observation is that the weights of players can be misleading in understanding weighted voting.  The winning coalitions determine everything about  a weighted game.  For instance, the sets of weights $w_3 = w_2 =0.49, w_1 =  0.02$ and $w_3' = w_2' = w_1' = 1/3$ are vastly different but at a quota of $0.51$ produce the same winning coalitions.  Both of these represent a simple majority game in which any two players can win by voting together.

\begin{definition} \label{def:simple}
A \emph{simple game} $g$ is a pair $\left([n],W_g \right)$ in which 
\begin{itemize}
\item $[n]=\{1,2, \hdots , n\}$ is a finite set of players,
\item $W_g$ is a collection of subsets of $[n]$ that represent the winning coalitions for $g$, and
\item $[n] \in W_g$ while $\emptyset \notin W_g$,
\end{itemize}
that satisfies the \emph{monotonicity property}:  if a coalition is winning, then its supersets must also be winning, i.e., if $S \in W_g$ and $ S \subseteq R \subseteq [n]$, then $R \in W_g$. 
\end{definition}

\begin{example}{\label{ex:maj}}
For example, a simple majority game $g$ with three players, namely $[3]=\{ 1,2,3\}$, in which any two players can win by voting together is given by $W_g=\{32, 31, 21, 321\}$.  Thus, $g=([3], \{32, 31, 21, 321\})$.  \hfill{$\Diamond$}
\end{example}

Henceforth, we assume that all games considered are simple.  Also, we note that many authors choose to omit the requirement that $[n] \in W_g$ while $\emptyset \notin W_g$.  Our  choice is advantageous from a voting and a geometric perspective but not from a combinatorial perspective, as we discuss in Remark~\ref{rmk:allwin}.

\begin{definition} 
A simple game is \emph{weighted} if there exist weights $w_i \in [0,\infty)$ and a quota $q \in (0,\sum w_i]$ such that coalition $A$ is winning if and only if the sum $w_A$ of its weights is greater than or equal to the quota.  The vector $(q: \ww) = (q: w_n, \ldots, w_2, w_1)$ is said to \emph{realize} (or \emph{represent}, we use these terms synonymously) $v$ as a weighted game. 
\end{definition}

The game $g$ appearing in Example~\ref{ex:maj} is weighted since it is realized by the vector $(60: 34, 33, 33)$.  See Appendix~\ref{app:unweighted6} for a list of unweighted games with $6$ players.

Contrary to much of the voting literature, we enumerate players by \emph{increasing} weight, in order to easily determine the rank of a coalition in the poset $M(n)$ (cf.\  Section~\ref{sec:mn}).  We shall refer to players by the corresponding ordinals: player $\v{n}$ has the greatest weight, player $\v{n-1}$ has the next greatest, \ldots , player $\v{1}$ has the lowest weight.   We follow the convention that the weight vector $\ww=(w_n, \ldots, w_1)$ lists the weights in decreasing order.

Note that weighted voting is scale-invariant:  multiplying each of the weights and the quota by a positive constant does not change the winning coalitions.  Therefore, we may normalize the weights so that they sum to 1.  Define a \emph{normalized weight} to be a vector $\ww=(w_n,\ldots, w_1)$  that satisfies
\begin{equation} \label{w-order}
w_n \geq w_{n-1} \geq \ldots \geq w_1 \geq 0, \qquad \sum_{i=1}^n w_i = 1 .
\end{equation}
We denote the set of normalized weights in $\R^n$ as $\Delta_n$, since it forms an $(n-1)$-dimensional simplex with vertices 
$p_1 = (1,0,0,0, \ldots)$, \; $p_2 = \left(\tfrac{1}{2}, \tfrac{1}{2}, 0, 0, \ldots \right)$, \; $p_3  =  \left(\tfrac{1}{3}, \tfrac{1}{3}, \tfrac{1}{3}, 0, 0, \ldots \right)$, \; \ldots,  \; $p_n =  \left(\tfrac{1}{n}, \tfrac{1}{n}, \ldots, \tfrac{1}{n}\right)$.

\begin{remark}
Geometrically, ordering the weights restricts their geometry from the orthant\footnote{The term \emph{orthant} is the $n$-dimensional analogue of a quadrant or octant.} 
$\R_{\geq 0}^n\setminus~\{\mathbf{0}\}$ to the closure (in the subspace topology), minus the origin, of one particular component of the configuration space $\mathtt{C}_n\left( (0, \infty) \right)$.  (The configuration space $\mathtt{C}_n(X)$ consists of all ordered $n$-tuples of distinct $x_i \in X$.)  This closure produces an unbounded polytope of infinite rays, diffeomorphic to $\Delta_n \times (0,\infty)$.    Normalizing the weights deformation retracts this space onto the compact simplex $\Delta_n$.
\hfill{$\Diamond$}
\end{remark}

On $\Delta_n$, we use coordinates $\{w_n, w_{n-1}, \ldots, w_2\}$  and view $w_1$ as a dependent variable equal to $1-w_n  - \ldots - w_2$.

\begin{definition}
For $n$ players, the \emph{configuration region} $\C_n$ is the space of all realizations $(q:\ww)$ of weighted games,  that is,
\begin{equation}
\C_n := (0,1] \times \Delta_n \subset \R^{n+1}.
\end{equation}
\end{definition}

In Section~\ref{sec:geometry}, we study the geometry of $\C_n$  as an approach toward understanding weighted voting.  Note that the set of weighted games on $n$ players may be equivalently defined as the nonempty equivalence classes of points in $\C_n$ where two points are equivalent if they produce the same winning coalitions.

\subsection{Background on simple games}{\label{sec:games}}
We will require a few definitions regarding simple games, which we provide here.  We begin with the desirability relation on players, which was introduced in~\cite{Isb58} and generalized in~\cite{MasPel66}.   (See also~\cite{Mur71}.)

\begin{definition}
Let $([n],W)$ be a simple game.  We say that player $\v{i}$ is \emph{more desirable} than player $\v{j}$ (denoted $\v{i} \succeq \v{j}$) in $([n],W)$ if 
$$S \cup \{ \v{j} \} \in W \Rightarrow S \cup \{ \v{i} \} \in W, \qquad \rm{for} \; \rm{all \; coalitions} \; S \subseteq [n] \setminus \{\v{i},\v{j}\}.$$  
We say that players $\v{i}$ and $\v{j}$ are \emph{equally desirable} (denoted $\v{i} \sim \v{j}$) in $([n], W)$ if 
$$S \cup \{ \v{j} \} \in W \iff S \cup \{ \v{i} \} \in W, \qquad \rm{for} \; \rm{all \; coalitions} \; S \subseteq [n] \setminus \{\v{i},\v{j}\}.$$
Any simple game with a totally ordered desirability relation is called \emph{linear} (or \emph{directed} or \emph{complete}.)
\end{definition}

Each linear game breaks the players into equivalence classes of equally desirable players; this decomposition is called a \emph{hierarchy} and the equivalence classes are sometimes referred to as \emph{symmetry classes}.  The \emph{power composition}, $\gamma_v$, of a linear game $v$ with $k$ nontrivial equivalence classes in its hierarchy is the composition $\gamma_v:=(a_1,a_2, \hdots , a_k)$ where $a_i$ is the number of players in equivalence class $i$.  That is, if a game has $n_0$ dummies, $n_1$ players in its strongest class, $n_2$ in the next strongest, down to $n_k$ in its weakest nontrivial class, then it has power composition $(n_1, n_2, \ldots, n_k)$, which is a composition of $n-n_0$ into $k$ parts.  The power composition is frequently denoted in the literature by $\overline{n}$.  See Carreras and Freixas~\cite{CarFreComplete} or Freixas and Molinero~\cite{FreMol09} for several important uses of the power composition as a vector.  We note that any power index which is monotone, i.e., respects the desirability ordering, must distribute power according to this composition.

For example, suppose for a linear game $v$ on $7$ players that $\v{7} \succ \v{6} \sim \v{5} \succ \v{4} \succ \v{3} \sim \v{2} \sim \v{1}$; then there are four symmetry classes of players.  We may express its hierarchy either as the string $(\succ \sim \succ \succ \sim \sim)$ or using its power composition $(1,2,1,3)$.

\begin{definition}
One may view any linear game on $m$ players as inducing a linear game, called the \emph{induced game}, on $n>m$ players by simply adding $n-m$ dummies.  The $m$th strongest player remains $m$th, which means in our notation we add $n-m$ to each player's number.  Linear games share most properties with their induced games, including power compositions, weightedness and properness.  See Example~\ref{ex:induced}.
\end{definition}

\begin{example}~\label{ex:inducedA}
Consider the weighted game $g$ on 4 players realized by $(0.6:  0.35, 0.25, 0.2, 0.2)$ and specified by its six winning coalitions, $W_v = \{321, 421, 43, 431, 432, 4321\}$.  Its induced game on $5$ players has 12 winning coalitions: $\{432,532,54,542,543,5432,4321,5321,541,5421,5431,54321\}$; player $1$ is a dummy.  \hfill{$\Diamond$}
\end{example}

We will study partial orderings of both linear and weighted games in Section~\ref{sec:poset}.  For now, let us consider the ordering on players in a weighted game.

For a weighted game $v$, the desirability ordering weakly respects the ordering by weights, as follows.  If $w_i=w_j$, then $\v{i} \sim \v{j}$.  If $w_i > w_j$, then $\v{i} \succeq \v{j}$, since the weight of $\{ j \} \cup S$ ($i,j \notin S$) will be strictly less than the weight of $\{ i \} \cup S$, and hence $\{ j \} \cup S \in W_v$ implies $ \{ i \} \cup S \in W_v$.

The desirability ordering for a weighted game is a total ordering on the players, and hence all weighted games are linear.  Not all linear games are weighted though.  The first examples occur for $n=6$ players, where 60 of the 1171 linear games fail to be weighted; for reference, we list these in Appendix~\ref{app:unweighted6}.

\subsection{Duality}  
Linear games feature a useful notion of duality.  Each linear game has a unique dual linear game, with which it shares many important properties:  weightedness, power compositions, and as we will prove in Theorem~\ref{thm:dualgeom}, a geometric congruence.

\begin{definition}
For a simple game $v$, we define its \emph{dual game} $v^*$ by specifying its winning coalitions:
\begin{center}
coalition $A$ is winning in $v^* \quad \Leftrightarrow \quad$ coalition $A^c$ is losing in $v$.
\end{center}
A linear game $v$ is \emph{self-dual} if and only if $v$ equals its dual game $v^*$.  
\end{definition}
 
In many voting contexts, there is a restriction that at most one side may win, i.e., no coalition and its complement are both winning.  One rationale for this restriction is that if opposing sides (complementary coalitions) could both win, a decision would not be reachable and the result of the process would be a stalemate.  

\begin{definition}
A simple game is said to be \emph{proper} if for each complementary pair $\{A, A^c\}$ of coalitions, at most one is winning; otherwise it is \emph{improper}.  A simple game is said to be \emph{strong} if for each complementary pair $\{A, A^c\}$ of coalitions, at least one is winning.  
\end{definition}

Observe that the dual of a proper game is strong, and vice-versa.  Furthermore, a game is self-dual if and only if it is both proper and strong, i.e., for each complementary pair $\{A, A^c\}$ of coalitions, precisely one is winning.  This terminology has evolved as the literature on this subject has grown to include many different viewpoints; we largely follow \cite{TZ99}.  For example, self-dual games have been referred to as \emph{constant-sum} games by von Neumann and Morgenstern~\cite{vNeMorBook}, \emph{strong} games by Isbell~\cite{Isb59}, and \emph{zero-sum} games by Krohn and Sudh\"{o}lter~\cite{KS95}.  The term \emph{self-dual} is most descriptive in our context since the notion of duality plays an important role in our approach.

\begin{example}
Consider the game $v$ on $3$ players given by winning coalitions $W_v=\{21,31,32,321\}$.  The coalitions which are losing in $v$ are $\{\emptyset, 1,2,3\}$.  Their complements, the winning coalitions in $v^*$, are $\{321,32,31,21\}$, precisely the winning coalitions in $v$.  Therefore $v$ is self-dual.  

Next consider the game $u$ on $3$ players given by winning coalitions $W_u=\{31, 32,321\}$.  The coalitions which are losing in $u$ are $\{ \emptyset, 1,2,3,21\}$. Their complements, the winning coalitions in $u^*$, are $\{ 321, 32,31,21,3\}$.  Note that the winning coalitions in $u^*$ are not disjoint from the winning coalitions in $u$, but they are not equal.  Therefore $u$ is not self-dual, but it is proper.
\hfill{$\Diamond$}
\end{example}


\section{Weighted and linear games as partially ordered sets} 
\label{sec:poset}

A \emph{partially ordered set}, or \emph{poset}, $P$, is a set equipped with a binary ordering $\le$ which is reflexive, antisymmetric, and transitive.  The ordering is partial (as opposed to total) since not all elements of the set must be comparable under the ordering.  An element $y \in P$ is said to \emph{cover} another element $x \in P$ if $y > x$ and there is no $z$ such that $y > z > x$.

A poset $P$ is said to be \emph{ranked} (or \emph{graded}) if there exists a \emph{rank function} $\rho: P \rightarrow \mathbb{Z}_{\geq 0}$ compatible with the partial ordering 
such that $\rho(y) = \rho(x) + 1$ if $y$ covers $x$.  The value $\rho(x)$ is called the \emph{rank} of the element $x$.  The numbers of elements of each rank can be organized into a \emph{rank-generating function} given by
$$\sum_{r=0}^{m} a_r q^r,$$ 
where $m$ is the maximal rank and $a_r$ is the number of elements of rank $r$.

In this section, we describe four different posets associated to weighted voting and linear games.  We begin in Section~\ref{sec:mn} with the well-known poset $M(n)$ which represents the ordering on coalitions within an ordered set of $n$ players.  Then we argue that the linear games on $n$ players are in bijection with the filters of $M(n)$.  These filters possess a partial ordering, from which we form a poset of linear games $J_n$.  We also construct subposets representing all weighted games and all proper linear games.

\subsection{An ordering on coalitions}
\label{sec:mn}

We label a coalition of players by listing the indices of the players represented in the coalition as a decreasing sequence.  For example, the coalition formed by players $\v{5}, \v{4}$, and $\v{2}$ is denoted $\{5,4,2\}$ or merely $542$ when clear.  

\begin{definition}{\label{partialorder}}
The \emph{coalitions poset}, denoted $M(n)$, consists of all possible coalitions of $n$ players under the following ordering.  Coalition $B=\{b_1,b_2, \hdots , b_j\}$ is greater than or equal to coalition $A=\{a_1, a_2, \hdots , a_k\}$ if and only if $j \geq k$ and $b_i \geq a_i$ for each $1 \le i \le k$.
We denote this $B \ge A$.
\end{definition}

Definition {\ref{partialorder}} produces a partial ordering on the set of all coalitions formed by $n$ players since it satisfies the reflexivity, antisymmetry, and transitivity conditions.  Krohn and Sudh\"{o}lter~\cite{KS95} use this ordering to count weighted and linear games.  Figure~\ref{fig:poset4} depicts the coalitions posets $M(3)$ and $M(4)$. 

The poset $M(n)$ (A distributive lattice) appears in several other settings and has many interesting combinatorial properties.  In particular, the rank-generating function of $M(n)$ is $\prod_{i=1}^n (1+q^i)$, which was proven to have unimodal coefficients by Hughes~\cite{Hug77}.  The structure of $M(n)$ was shown by Lindstr\"{o}m~\cite{Lin70} to be related to a conjecture of Erd\"{o}s and Moser~\cite{Ent68, Erd65, SarSze65}.  Stanley uses the Coxeter system structure of type $B_n$ to obtain a different construction of the poset $M(n)$.  Using  this new description, he shows that the poset exhibits property $S$, which is a stronger property than the Sperner property, and he provides a new proof that the poset is rank-unimodal~\cite{Sta80}.  The poset $M(n)$ is also known to be a distributive lattice, a fact that will assist us herein.

\subsection{A poset for linear games} \label{sec:jn}
Having defined a poset of coalitions, we now define posets for linear, proper, and weighted games.  

Consider two coalitions $B \ge A$; if $A$ is winning in a simple game $v$, then $B$ must also be  winning in $v$.  This means that the set $W_v$ of winning coalitions in the game $v$ extends upwards from one or more lowest winning coalitions to the top of $M(n)$.  The set $W_v$ forms what is called a \emph{filter} of $M(n)$; its lowest elements are the generators of this filter.  Since each linear game $v$ is determined by its set of winning coalitions, each one can be described uniquely as a filter of $M(n)$.   

The \emph{shift-minimal winning coalitions} (or \emph{generators}) for a linear game are the generators of the filter it represents in $M(n)$.  We observe that every shift-minimal winning coalition is minimal, but not vice-versa.  We will use the shift-minimal winning coalitions to denote a linear game; that is, if $A_1, A_2, \hdots , A_k$ are the generators for game $v$, we write $v = \langle A_1, A_2, \hdots , A_k \rangle$.  A filter possessing only one generator is known as a \emph{principal filter} and corresponds to a game with only one generating coalition.  When denoting such games, we often drop the brackets.

\begin{example}  \label{ex:induced}
Recall the weighted game $v$ on 4 players realized by $(0.6:  0.35, 0.25, 0.2, 0.2)$ introduced in Example~\ref{ex:inducedA}.  Its winning coalitions are $321, 421, 43, 431, 432, 4321$; of these, the first three are minimal.  There are two shift-minimal winning coalitions, namely $43$ and $321$.  We denote this game as $v=\langle 43, 321 \rangle$.

In Section~\ref{sec:games}, we mentioned that any linear game on $n$ players induces a similar game on $m$ players for each $m>n$; these induced games are obtained by adding dummies.  In this example, on $n=6$ players $v$ induces the game $u=\langle 65, 543 \rangle$, generated by coalitions  ${65}$ and ${543}$. Both games have power composition $(2,2)$.  Here, $u$ is also a weighted game, realized by $(0.6:  0.35, 0.25, 0.2, 0.2, 0, 0)$.
\hfill{$\Diamond$}
\end{example}

Linear games on $n$ players are in one-to-one correspondence with filters of $M(n)$.  By choosing any set of incomparable coalitions in $M(n)$, we are specifying the shift-minimal winning coalitions for some unique linear game, and we are specifying the generators of a unique filter of $M(n)$.  It is worth noting that not every filter of $M(n)$ produces a weighted game.  We may thus construct a poset of linear games using the natural partial ordering (containment) on filters as follows. 

\begin{definition} \label{def:jn}
 A linear game $v$ is \emph{stronger than} another linear game $u$, denoted $v \succeq u$, if every winning coalition in $v$ also wins in $u$, i.e., if $W_v \subset W_u$.   We refer to the set of linear games on  $n$ players under this partial ordering as the \emph{linear games poset} and denote it by $J(M(n))$ or simply $J_n$.  
\end{definition}

For any finite poset $P$, the poset $J(P)$ of filters of $P$ under the containment ordering is known to be a distributive lattice.  Thus, the linear games poset $J_n$ is a distributive lattice.  Also, $J_n$ is ranked by the number of losing coalitions in each game.  For example, the linear game $\langle 653, 5432 \rangle$ in $J_6$ has rank $48$ since there are $16$ winning coalitions in this linear game, $64$ total coalitions in $M(6)$, and therefore $64-16=48$ losing coalitions.  

\begin{remark} \label{rmk:allwin}
Formally, the linear games poset is only a \emph{subposet} of a ranked lattice, since we have excluded games of rank $0$ (where every coalition, even the empty coalition, is winning) and rank $2^n$ (where every coalition is losing) from our Definition~\ref{def:simple} of \emph{simple games}.  (A ranked poset must have minimal rank 0.)  Many authors choose to include these games.  Were we to  extend $J_n$ to include them, then the resulting poset would have minimal rank $0$ and maximal rank $2^n$.  From a voting perspective, these two games are somewhat unnatural as they represent situations where the players have no control over the outcome.  Furthermore, some of our results in Section~\ref{sec:geometry} are not valid for these two games.
\hfill{$\Diamond$}
\end{remark}

In general, determining if a linear game is weighted can be difficult.  One characterization of weighted games that will be useful below was given by Taylor and Zwicker~\cite{TZ92} in terms of general trading.  In the following, a trade is not restricted to a one-for-one exchange of players.  Any number of players can be moved among coalitions arbitrarily, provided none of the resulting coalitions contains more than one copy of any player.   A simple game $G$ is said to be \emph{trade robust} if for every collection $X=\{ X_1, X_2, \hdots , X_j \}$ of (not necessarily disjoint) winning coalitions in $G$, it is not possible to trade members among the coalitions to produce a collection $Y=\{ Y_1, Y_2, \hdots , Y_j \}$ such that the coalitions in $Y$ are all losing. 

\begin{theorem}{~\cite{TZ92}}
A game $G$ is weighted if and only if it is trade robust.
\end{theorem}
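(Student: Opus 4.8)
The plan is to prove the two implications separately. The forward direction, \emph{weighted} $\Rightarrow$ \emph{trade robust}, is a short averaging argument, so I would dispose of it first. Suppose $G$ is realized by nonnegative weights $w$ and quota $q$, and write $\chi_A \in \{0,1\}^n$ for the characteristic vector of a coalition $A$, so that $w_A = w \cdot \chi_A$. Any trade from winning coalitions $X_1,\ldots,X_j$ to coalitions $Y_1,\ldots,Y_j$ preserves the total multiset of voters, i.e. $\sum_k \chi_{X_k} = \sum_k \chi_{Y_k}$, hence $\sum_k w_{X_k} = \sum_k w_{Y_k}$. Since each $X_k$ is winning, the left side is at least $jq$; if every $Y_k$ were losing, the right side would be strictly less than $jq$, a contradiction. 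Thus $G$ is trade robust.

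For the reverse direction I would prove the contrapositive: if $G$ is not weighted, then $G$ is not trade robust. The first step is to recast weightedness as feasibility of a finite linear system. Because there are only finitely many coalitions, $G$ is weighted precisely when the system in the unknowns $(w,q) \in \R^{n+1}$ given by $w \ge 0$, by $\chi_A \cdot w - q \ge 0$ for each winning $A$, and by $q - \chi_B \cdot w \ge 1$ for each losing $B$ is feasible; rescaling a strictly separating weight vector upgrades a positive gap to a gap of at least $1$, which justifies the constant on the right. Assuming $G$ is not weighted, this system is infeasible, so by Farkas' lemma (the theorem of the alternative for $\{Mx \ge c\}$) there is a nonzero nonnegative dual solution: multipliers $\mu_i \ge 0$ for the constraints $w_i \ge 0$ and $\alpha_A,\beta_B \ge 0$ for the winning and losing constraints, with $M^T y = 0$ and $c^T y > 0$.

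The final step reads these multipliers as a trade, and it is here that the main obstacle appears. The dual equation in the variable $q$ forces $\sum_A \alpha_A = \sum_B \beta_B =: m$, while $c^T y > 0$ forces $m > 0$, and the equation for each $w_i$ yields the \emph{componentwise} inequality $\sum_A \alpha_A \chi_A \le \sum_B \beta_B \chi_B$ (the slack being exactly $\mu_i$). Since the data are rational I may clear denominators and take all $\alpha_A,\beta_B$ to be nonnegative integers, then form the multiset $X$ containing each winning $A$ with multiplicity $\alpha_A$ and the multiset $Y$ containing each losing $B$ with multiplicity $\beta_B$; both have $m$ coalitions and $\sum_{X_k}\chi_{X_k} \le \sum_{Y_k}\chi_{Y_k}$. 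A genuine trade, however, demands that the two total voter multisets be \emph{equal}, and Farkas supplies only an inequality. I expect this to be the crux, and it is resolved by monotonicity: because losing coalitions are downward closed, I may delete the surplus voters recorded by $\sum_{Y_k}\chi_{Y_k} - \sum_{X_k}\chi_{X_k} \ge 0$ from the coalitions of $Y$ one copy at a time, obtaining $Y'_k \subseteq Y_k$ that remain losing and satisfy $\sum \chi_{Y'_k} = \sum \chi_{X_k}$. Then $X$ and $Y' = \{Y'_1,\ldots,Y'_m\}$ have identical total voter multisets and the same cardinality $m$, so $Y'$ is reachable from $X$ by a trade converting winning coalitions into all-losing ones. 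Hence $G$ is not trade robust, completing the contrapositive and the proof.
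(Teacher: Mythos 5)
You should note first that the paper itself offers no proof of this statement: it is quoted verbatim from Taylor and Zwicker \cite{TZ92}, so there is no internal argument to compare against. Judged on its own, your proof is correct. The forward direction (weight-counting over a trade) is the standard argument and is airtight. For the converse, your reduction of weightedness to feasibility of the rational system $w \ge 0$, $\chi_A \cdot w - q \ge 0$ (winning $A$), $q - \chi_B \cdot w \ge 1$ (losing $B$) is legitimate --- the rescaling remark correctly converts a strict separation into a unit gap, and since $\emptyset$ is losing and $N$ is winning, any feasible point automatically satisfies the paper's requirement $q \in (0, \sum w_i]$. The Farkas certificate is read off correctly: the $q$-column forces $\sum_A \alpha_A = \sum_B \beta_B = m > 0$, the $w_i$-columns give the componentwise domination $\sum_A \alpha_A \chi_A \le \sum_B \beta_B \chi_B$ with slack $\mu_i$, and rationality of the data justifies integral multipliers. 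You also correctly identified and resolved the one genuine subtlety: Farkas yields an inequality of voter multisets where a trade demands equality, and your trimming step --- deleting the $\mu_i$ surplus copies of voter $\v{i}$ from coalitions of $Y$, which is possible since voter $\v{i}$ occurs in $\sum_{B \ni i}\beta_B \ge \mu_i$ of them, and which preserves losingness by monotonicity --- closes that gap, producing collections of equal size $m$ with identical voter multisets, i.e.\ a genuine trade from all-winning to all-losing. This duality-plus-trimming argument is essentially the classical ``asummability'' proof for threshold functions and is in the same spirit as Taylor and Zwicker's original treatment, so while it is a fine self-contained proof, it is not a fundamentally new route to the theorem.
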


Consider the linear game on $9$ players generated by the single shift-minimal coalition $8741$, denoted $\langle 8741 \rangle$.  
Let $X=\{ 9741, 8752 \}$ and trade $8$ for $41$ to form $Y=\{ 987, 75421 \}$. The coalitions in $X$ are all winning coalitions in $\langle 8741 \rangle$ and the coalitions in $Y$ are all losing in $\langle 8741 \rangle$.  Therefore the linear game $\langle 8741 \rangle$ is unweighted since it fails to be trade robust.

\subsection{Comparing posets} \label{sec:comparing}

Let us now consider three subposets of $J_n$.  Denote by $J_n^+$ the `top half' of $J_n$, that is all games of rank exceeding or equal to half the maximal rank $2^n$.  We frequently consider proper linear games; these also form an induced subposet of $J_n$, which we denote as $\Pi_n$.  Recall that in a proper linear game, if a coalition $A$ is winning then its complement $A^c$ cannot be winning.  Therefore at most $2^{n-1}$ coalitions can be winning in a proper linear game.  This means that the elements of $\Pi_n$ must contain at least $2^{n-1}$ losing coalitions and therefore must have rank at least $2^{n-1}$; thus $\Pi_n$ lies in $J_n^+$.  However, there do exist improper linear games of rank greater than $2^{n-1}$, as we discuss in Theorem~\ref{thm:inclusions} and Appendix~\ref{app:unweighted6}.

In poset $J_n$ (and $\Pi_n$), game $v$ covers game $u$ if $W_u = W_v \cup \{A\}$ for some coalition $A \notin W_v$.   

\begin{definition} \label{defn:wn}
The \emph{weighted games poset} $\w_n$  is the set of all weighted games on $n$ players along with the partial ordering that arises from this covering relation.  We also define the poset $\w_n^+=\w_n \cap J_n^+$.
\end{definition}

In this section, we ask when the posets $\w_n^+, \Pi_n,$ and $J_n^+$ are equal.  We prove that all weighted games in $\w_n^+$ are proper, some proper games are unweighted, and some linear games in $J_n^+$ are improper.  Since $\w_n$ is defined by the same covering relation as $J_n$, it forms a subposet of the linear games poset $J_n$. We further show that $\w_n$ is an induced subposet of $J_n$.

\begin{theorem} \label{thm:inclusions}
The following inclusions hold
\begin{equation} \label{eq:wpj}
\w_n^+ \subset \Pi_n \subset J_n^+ .
\end{equation}
The first inclusion is strict precisely for $n\geq7$ players, and the second is strict precisely for $n \geq 6$.
\end{theorem}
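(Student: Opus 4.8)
The plan is to dispatch the two inclusions directly and then reduce each strictness claim to a single threshold value of $n$ via a dummy-extension argument. The inclusions are nearly immediate: a game in $\w_n^+$ is proper by definition and weighted, hence linear (Section~\ref{sec:background}), so it is a proper linear game lying in $\Pi_n$; and a proper linear game wins at most one coalition from each complementary pair, giving it at most $2^{n-1}$ winning coalitions and therefore rank at least $2^{n-1}$, so it lies in $J_n^+$. For the reduction, observe that if a game on $m$ voters has $w$ winning coalitions then its induced game on $n>m$ voters has $w\cdot 2^{n-m}$ winning coalitions and rank $2^{n-m}(2^m-w)$; hence the induced game lies in $J_n^+$ exactly when the original lies in $J_m^+$. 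Since properness, improperness, weightedness, and unweightedness all pass to induced games, producing a single witness at the threshold value of $n$ automatically produces witnesses for all larger $n$.

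For the strictness of $\Pi_n\subset J_n^+$, I would exhibit an improper linear game of rank exactly $2^{n-1}$ at $n=6$, for instance $\langle 641,532\rangle$. Here $641$ and $532$ are incomparable and complementary, so both are winning and the game is improper; a short count of the filter they generate shows it has exactly $32=2^5$ winning coalitions, placing the game in $J_6^+\setminus\Pi_6$. Extending by dummies produces witnesses for all $n\geq 6$. To show the inclusion is an equality for $n\leq 5$, I would verify by enumeration that every improper linear game on at most five voters has strictly more than $2^{n-1}$ winning coalitions; informally, a complementary pair of winning coalitions forces the generated filter to exceed half of $M(n)$ only once $n\geq 6$.

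For the strictness of $\w_n^+\subset\Pi_n$, I would exhibit a proper unweighted linear game at $n=7$, for instance $\langle 7531\rangle$. Properness is immediate: every winning coalition has at least four voters, so two disjoint winning coalitions would need eight voters, which is impossible with seven; hence no complementary pair both win. Unweightedness follows from the Taylor--Zwicker trade-robustness criterion: the winning coalitions $7631$ and $7542$ retrade into the losing coalitions $765$ and $74321$, each pair of coalitions having the same multiset union $\{7,7,6,5,4,3,2,1\}$, so the game is not trade robust and hence not weighted. Being proper but unweighted, it lies in $\Pi_7\setminus\w_7^+$, and extending by dummies yields witnesses for all $n\geq 7$.

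It remains to show $\w_n^+=\Pi_n$ for $n\leq 6$. For $n\leq 5$ this is immediate, since every linear game on at most five voters is weighted. The main obstacle is the boundary case $n=6$: one must confirm that all $60$ unweighted linear games on six voters (Appendix~\ref{app:unweighted6}) are improper, so that none of them appears in $\Pi_6$. This is precisely what pins the first threshold at $7$ rather than $6$, and I would establish it by checking each of the $60$ games for a pair of complementary winning coalitions.
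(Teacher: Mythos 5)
Your second inclusion, your dummy-extension rank computation, and your strictness witnesses are all sound, and the witnesses are legitimate alternatives to the paper's: $\langle 7531\rangle$ is proper (every winning coalition contains voter $\v{7}$ and has four members) and fails trade robustness exactly as you describe, while $\langle 641,532\rangle$ is an improper linear game of rank exactly $2^{5}$, so it lies in $J_6^+\setminus\Pi_6$; the paper instead uses $\langle 6531\rangle$ and $\langle 65,4321\rangle$, but either choice settles the thresholds once combined with the dummy-extension argument, which matches the paper's observation that induced games double in rank and preserve properness and weightedness. Your handling of the boundary case $n=6$ for the first inclusion (checking that all $60$ unweighted six-voter games in Appendix~\ref{app:unweighted6} are improper) is also exactly what the paper does.

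The genuine gap is in the first inclusion itself. You read $\w_n^+$ as ``proper weighted games,'' which makes $\w_n^+\subset\Pi_n$ true by definition; but the paper's proof shows that the operative meaning of $\w_n^+$ in this theorem is $\w_n\cap J_n^+$, the weighted games of rank at least $2^{n-1}$ (the proof concludes ``the rank of $v$ is less than $2^{n-1}$, i.e., $v\notin\w_n^+$''). Under that reading the first inclusion is a non-vacuous claim: \emph{a weighted game with at least $2^{n-1}$ losing coalitions cannot be improper}. The paper proves this via Lemma~\ref{lem:complement-proper}: if a weighted game wins a complementary pair $A, A^c$, then trading voters between $A$ and $A^c$ can produce any other complementary pair $B, B^c$, and trade robustness forces at least one of $B, B^c$ to be winning; hence an improper weighted game wins at least one coalition from every complementary pair, plus both members of one pair, pushing its rank strictly below $2^{n-1}$. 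Nothing in your proposal plays this role, so under the paper's reading the inclusion is unproved. The same lemma is also what lets the paper handle $n\le 5$ for the second inclusion conceptually (all linear games on at most five voters are weighted, and improper weighted games sit strictly below the middle rank), precisely the point where you leave a promissory note of enumeration. Adding that trade-robustness lemma would close both holes at once.
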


\begin{proof}
The second inclusion, $\Pi_n \subset J_n^+$, is immediate since every proper linear game has at least half of the coalitions losing.  Thus its rank is at least $2^{n-1}$, and it lies in the top half of $J_n$.  We use the following lemma to prove the first inclusion.

\begin{lemma} \label{lem:complement-proper}
Let $v\in J_n$ be an improper weighted game.  Then $W_v$, the set of winning coalitions for $v$, must include at least one coalition from each complement pair $\{A, A^c\}$ where $A \in W_v$.
\end{lemma}

\begin{proof}[Proof of Lemma~\ref{lem:complement-proper}]
We assume $v$ is weighted and improper, so it includes a pair of complementary winning coalitions, $A$ and $A^c$.  We may trade players between these coalitions to form any desired complement pair of coalitions, $B$ and $B^c$.  Trade robustness guarantees that at least one of these coalitions is winning.
\end{proof}

Now we show the first inclusion in \eqref{eq:wpj}.  Suppose the weighted game $v$ is improper.  By the above lemma, $W_v$ includes at least one coalition from each complement pair.  As $v$ is improper, $W_v$ also includes both coalitions of some complement pair, so the rank of $v$ is less than $2^{n-1}$, i.e., $v \notin \w_n^+$.  Hence, $\w_n^+ \subset \Pi_n$.

The following examples establish when these inclusions are strict.  Note that for $n \leq 5$ players, all linear games are weighted, so both inclusions above are equalities.  For $6$ players, there are 60 unweighted games:  20 have rank less than 32, while 20 have rank equal to 32, and 20 have rank greater than 32; we list these in Appendix~\ref{app:unweighted6}.  None of them are proper.
\begin{enumerate}
\item The game $\langle 6531 \rangle \in \Pi_7$ is not weighted since it fails trade robustness - $7542$ and $6531$ are both winning but can be traded to form the two losing coalitions $765$ and $54321$.
\item Of the $40$ improper games in $J_6^+$, the one with the highest rank, 37, is $\langle 65, 4321 \rangle$.  It is generated by two complementary coalitions and is thus improper.  
\end{enumerate}
To obtain examples for larger $n$ values, simply add dummy players to these games; the induced games share the weightedness and properness of the original game.  Thus we have established the strictness criteria above.
\end{proof}

 We now show $\w_n$ is an induced subposet of $J_n$.  This question of inducement can be rephrased as follows:  if $v \succ u$ in $J_n$ for two weighted games, must there exist a saturated chain in $J_n$ from $u$ to $v$ comprised only of weighted games?   
 
\begin{proposition}  \label{prop:induced}
The weighted games poset $\w_n$ is an induced subposet of $J_n$.  
\end{proposition}

\begin{proof}
We offer a geometric proof of this combinatorial proposition.  Consider two distinct weighted games $u_1 \succ u_0$, realized by points $z_1=(q_1: \ww_1)$ and $z_0=(q_0: \ww_0)$, respectively.  We prove that there exists a saturated chain of weighted games from $u_0$ to $u_1$.  Let $\ell(\lambda)$, for $0\leq \lambda \leq 1$, be the line segment in $\C_n$ connecting these two points, with $\ell(0)=z_0$ and $\ell(1)=z_1$. Along $\ell$, consider the weighted games $u_\lambda$ realized by the quotas and weights $\left(\lambda q_1 + (1-\lambda)q_0: \lambda \ww_1 + (1-\lambda) \ww_0 \right)$.  

Consider a coalition $A$ winning in $u_0$ but losing in $u_1$.  Let $w_A$ equal the sum of the weights of the players in this coalition. Then there exists $\lambda_A \in (0,1]$ where line $\ell$ from $z_0$ to $z_1$ intersects the hyperplane $h_A$, defined by $q=w_A$.  (We utilize these hyperplanes in Section~\ref{sec:geometry} to describe the geometry of weighted games.)  Observe $A$ is winning in $u_\lambda$ if and only if $\lambda \leq \lambda_A$.  

If all $\lambda_A$ values are distinct, among all coalitions winning in $u_0$ and losing in $u_1$, i.e., among all coalitions in the set $W_{u_0} \setminus W_{u_1}$, then they provide a total ordering on this set.  From this total ordering, we obtain our desired saturated chain of weighted games from $u_0$ to $u_1$ as follows.  Delete each coalition in order from the winning coalitions for $u_0$; each deletion forms a new game until we arrive at $u_1$.  

If some $\lambda_A$ values are equal, then we may perturb the line $\ell$ to obtain a total ordering.  There are many different perturbation methods which accomplish our goal.  We now provide details here on one particularly nice geometric perturbation of $\ell$.

Suppose there are $k$ coalitions, $\{A_1, \ldots, A_k\}$ all possessing the same $\lambda_A$ value.  Let point $p=\ell(\lambda_A)$.  We can form a closed $\epsilon$-ball $B=B^n(p,\epsilon)$ centered at $p$ that is disjoint from all the other hyperplanes $h_A$, where $A\neq A_i$ for all $i$.  (Since there are only finitely many such hyperplanes, $B$ must exist for some choice of $\epsilon$.)  Let $r_0$ be the point where line $\ell$ first enters ball $B$, and $r_1$ be where $\ell$ exits $B$.  Observe all $A_i$ are winning at $r_0$ and losing at $r_1$.  

Our perturbation travels along the $(n-1)$-dimensional sphere $\Sigma$ bounding ball $B$.  On this sphere, points $r_0$ and $r_1$ are antipodal.  Between any two antipodal points on an $(n-1)$-dimensional sphere, the space of geodesics (semicircles) connecting them is canonically homeomorphic to an $(n-2)$-dimensional sphere, which may be naturally viewed as the `equator' $S^{n-2}_{eq}$, comprised of the points on the sphere $\Sigma$ equidistant from points $r_0$ and $r_1$.

\emph{Claim:}  Almost every geodesic connecting $r_0$ to $r_1$ encounters the coalitions $A_i$ at distinct points.  Equivalently, the geodesics simultaneously meeting two or more hyperplanes  form a set of measure zero on $S^{n-2}_{eq}$.

To prove the claim, consider two distinct hyperplanes $h_{A_i}$ and $h_{A_j}$.  Their intersection contains the center $p$ of ball $B$ and has dimension $n-2$.  This implies that  $h_{A_i} \cap h_{A_j}$ intersects $\Sigma$ in a great $(n-3)$-sphere\footnote{A \emph{great (sub)sphere} is one possessing the same radius as the original sphere, for example, the equator is a great circle on the earth's surface.}
 $\Sigma'$.  The set of geodesics meeting the intersection of these two hyperplanes has dimension $n-3$, and thus has measure zero in $S^{n-2}_{eq}$, which proves the claim.

The claim implies that perturbing $\ell$ by following almost any geodesic on $\Sigma$ from $r_0$ to $r_1$ will meet coalitions $A_1, \ldots, A_k$ distinctly and thereby totally order them.  Thus, even if some $\lambda_A$ values are equal, we may geometrically find a total ordering of the coalitions in $W_{u_0} \setminus W_{u_1}$, from which we produce a saturated chain of weighted games from $u_0$ to $u_1$.
\end{proof}

Just as $\w_n$ is an induced subposet of $J_n$, the proof of Proposition~\ref{prop:induced} guarantees that the poset $\w_n^+$ is an induced subposet both of $J_n^+$ and of $\Pi_n$.  Figure~\ref{fig:poset4} shows the poset $J_4^+$, which equals $\w_4^+$ and $\Pi_4$.

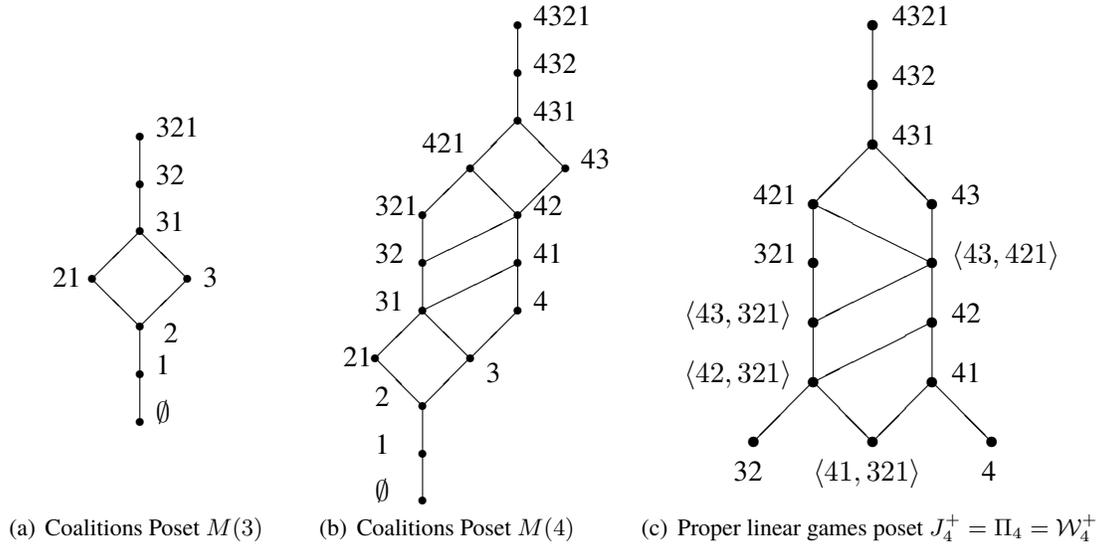
\begin{figure}[h]
\centering
\subfigure[Coalitions Poset $M(3)$]{
\setlength{\unitlength}{0.6pt}
\begin{picture}(160,250)(-80,-50)
	\put(0,180){\circle*{5}}
	\put(0,150){\circle*{5}}
	\put(0,120){\circle*{5}}
	\put(0,60){\circle*{5}}
	\put(0,30){\circle*{5}}
	\put(0,0){\circle*{5}}
	\put(30,90){\circle*{5}}
	\put(-30,90){\circle*{5}}
	\put(0,0){\line(0,1){60}}
	\put(0,120){\line(0,1){60}}
	\put(0,60){\line(1,1){30}}
	\put(0,60){\line(-1,1){30}}
	\put(0,120){\line(1,-1){30}}
	\put(0,120){\line(-1,-1){30}}
	\put(10,0){$\emptyset$}
	\put(10,30){1}
	\put(15,50){2}
	\put(-55,85){21}
	\put(40,85){3}
	\put(10,120){31}
	\put(10,150){32}
	\put(10,180){321}
\end{picture}
}
\subfigure[Coalitions Poset $M(4)$]{
\setlength{\unitlength}{0.6pt}
\begin{picture}(200,250)(-30,0)
\put(50,180){\circle*{5}}
	\put(50,150){\circle*{5}}
	\put(50,120){\circle*{5}}
	\put(50,60){\circle*{5}}
	\put(50,30){\circle*{5}}
	\put(50,0){\circle*{5}}
	\put(80,90){\circle*{5}}
	\put(20,90){\circle*{5}}
	\put(50,0){\line(0,1){60}}
	\put(50,120){\line(0,1){60}}
	\put(50,60){\line(1,1){30}}
	\put(50,60){\line(-1,1){30}}
	\put(50,120){\line(1,-1){30}}
	\put(50,120){\line(-1,-1){30}}
	\put(20,0){$\emptyset$}
	\put(20,30){1}
	\put(20,60){2}
	\put(0,85){21}
	\put(90,75){3}
	\put(20,120){31}
	\put(20,150){32}
	\put(20,180){321}
	\put(110,300){\circle*{5}}
	\put(110,270){\circle*{5}}
	\put(110,240){\circle*{5}}
	\put(110,180){\circle*{5}}
	\put(110,150){\circle*{5}}
	\put(110,120){\circle*{5}}
	\put(140,210){\circle*{5}}
	\put(80,210){\circle*{5}}
	\put(110,120){\line(0,1){60}}
	\put(110,240){\line(0,1){60}}
	\put(110,180){\line(1,1){30}}
	\put(110,180){\line(-1,1){30}}
	\put(110,240){\line(1,-1){30}}
	\put(110,240){\line(-1,-1){30}}
	\put(120,120){4}
	\put(120,150){41}
	\put(120,180){42}
	\put(50,220){421}
	\put(150,210){43}
	\put(120,240){431}
	\put(120,270){432}
	\put(120,300){4321}
	\put(80,90){\line(1,1){30}}
	\put(50,180){\line(1,1){30}}
	\put(50,120){\line(2,1){60}}
	\put(50,150){\line(2,1){60}}
\end{picture}}
\subfigure[Proper linear games poset $J_4^+ = \Pi_4 = \w_4^+$]{
\setlength{\unitlength}{0.75pt}
\parbox[b]{2.5in}{
\begin{picture}(200,250)(-90,-60)
	\put(30,180){\circle*{5}}
	\put(30,150){\circle*{5}}
	\put(30,120){\circle*{5}}
	\put(60,90){\circle*{5}}
	\put(0,90){\circle*{5}}
	\put(60,60){\circle*{5}}
	\put(0,60){\circle*{5}}
	\put(60,30){\circle*{5}}
	\put(0,30){\circle*{5}}
	\put(60,0){\circle*{5}}
	\put(0,0){\circle*{5}}
	\put(30,-30){\circle*{5}}
	\put(-30,-30){\circle*{5}}
	\put(90,-30){\circle*{5}}
	\put(60,0){\line(0,1){90}}
	\put(0,0){\line(0,1){90}}
	\put(30,120){\line(0,1){60}}
	\put(60,60){\line(-2,1){60}}
	\put(60,60){\line(-2,-1){60}}
	\put(60,30){\line(-2,-1){60}}
	\put(60,0){\line(-1,-1){30}}
	\put(60,0){\line(1,-1){30}}
	\put(0,0){\line(-1,-1){30}}
	\put(0,0){\line(1,-1){30}}
	\put(30,120){\line(-1,-1){30}}
	\put(30,120){\line(1,-1){30}}
	\put(70,0){41}
	\put(70,30){42}
	\put(70,90){43}
	\put(85,-50){4}
	\put(-40,-50){32}
	\put(-65,30){$\langle 43, 321\rangle$}
	\put(-65,0){$\langle 42, 321 \rangle$}
	\put(-30,60){321}
	\put(-30,90){421}
	\put(70,60){$\langle 43, 421 \rangle$}
	\put(40,120){431}
	\put(40,150){432}
	\put(40,180){4321}
	\put(0,-50){$\langle 41, 321\rangle $}
\end{picture}}
}
\caption{Poset examples for 3 and 4 players}
\label{fig:poset4}
\end{figure}

\subsection{Poset properties} \label{sec:properties}
We now consider various properties of our posets of games, including their ranks, covers, and inclusions. 

Recall that the filters in the poset $J_n$ are ranked by the cardinality of their losing coalitions, and since the cardinalities vary from 1 to $2^n-1$, the poset $J_n$ has rank $2^n-1$.

\begin{proposition} 
The weighted games poset $\w_n$ achieves each rank from $1$ to $2^n -1$.  The posets $\w_n^+, \Pi_n,$ and $J_n^+$ achieve each rank from $2^{n-1}$ to $2^n -1$.
\end{proposition}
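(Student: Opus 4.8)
The plan is to produce a single explicit family of weighted games and then read off all the required ranks from it. Recall that the rank of a game in $J_n$ is its number of losing coalitions, so a game of rank $r$ has exactly $2^n - r$ winning coalitions. Fix the weights $w_i = 2^{i-1}/(2^n-1)$ for $i = 1, \ldots, n$; these satisfy the ordering $w_n \geq \cdots \geq w_1 > 0$ and sum to $1$, so they lie in $\Delta_n$. The essential feature is that every coalition has a distinct weight-sum: since the unnormalized weights are the powers $2^0, \ldots, 2^{n-1}$, the $2^n$ subset sums are precisely the integers $0, 1, \ldots, 2^n-1$ in binary, all distinct, and none of the $2^n-1$ positive sums equals $1/2$ (as $(2^n-1)/2$ is not an integer). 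I would then study the one-parameter family of weighted games realized by $(q : \ww)$ as the quota $q$ sweeps the interval $(0,1]$.

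For the first statement, I would observe that the winning coalitions for $(q:\ww)$ are exactly those with weight-sum at least $q$. Let $s_1 < s_2 < \cdots < s_{2^n-1} = 1$ be the distinct positive subset sums. For $q \in (0, s_1]$ every nonempty coalition wins (only $\emptyset$ loses), giving rank $1$; each time $q$ crosses a threshold $s_j$, exactly one coalition leaves the winning set, raising the rank by one; and for $q \in (s_{2^n-2}, 1]$ only the grand coalition $N$ wins, giving rank $2^n-1$. Thus this family realizes a weighted game of every rank from $1$ to $2^n - 1$, proving the claim for $\w_n$.

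For the second statement I would restrict to quotas $q > 1/2$. Every such game is proper: if $A$ wins then $w_A \geq q > 1/2$, whence $w_{A^c} = 1 - w_A < 1/2 < q$, so $A^c$ loses. Hence each of these games lies in $\w_n^+$. The involution $A \mapsto A^c$ pairs the $2^n$ coalitions into $2^{n-1}$ complementary pairs with $w_A + w_{A^c} = 1$, and since no sum equals $1/2$, exactly one member of each pair has sum exceeding $1/2$; therefore precisely $2^{n-1}$ coalitions are winning as $q \to (1/2)^+$, giving rank exactly $2^{n-1}$. Sweeping $q$ from just above $1/2$ up to $1$ again drops one winning coalition at each threshold, so the sub-family realizes a proper weighted game of every rank from $2^{n-1}$ to $2^n-1$. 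Since $\w_n^+ \subset \Pi_n \subset J_n^+$ by Theorem~\ref{thm:inclusions}, these same ranks are attained in all three posets.

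The routine parts are the distinct-subset-sum property and the threshold-sweeping bookkeeping. The one point demanding care, which I regard as the main obstacle, is the boundary count at $q = 1/2$: one must confirm that the proper games begin at rank \emph{exactly} $2^{n-1}$ and that this rank is not skipped. This rests squarely on the complementation pairing together with the fact that no subset sum equals $1/2$, which is exactly why the powers-of-two weights (rather than arbitrary generic weights) make the argument clean.
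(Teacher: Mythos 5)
Your proposal is correct and is essentially the paper's own proof: both use the weights $2^{i-1}$ (yours merely normalized), exploit the fact that all $2^n$ subset sums are distinct, and sweep the quota to realize every rank from $1$ to $2^n-1$, then invoke Theorem~\ref{thm:inclusions} for the chain $\w_n^+ \subset \Pi_n \subset J_n^+$. The only difference is cosmetic: you verify properness of the games with $q > 1/2$ directly via the complementation pairing, whereas the paper obtains this from Theorem~\ref{thm:inclusions} (improper weighted games have rank below $2^{n-1}$), so no new idea is involved.
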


\begin{proof}
By construction $\w_n \subset J_n$; Theorem~\ref{thm:inclusions} showed that $\w_n^+ \subset \Pi_n \subset J_n^+$.  So, it suffices to construct a weighted game for each rank.

Let player $\v{i}$ have unnormalized weight $2^{i-1}$.  Under these weights, the coalitions are totally ordered in the sense that no two coalitions have the same weight.  Choosing a quota of 1 produces the weighted game $\langle 1 \rangle$ which has rank 1; choosing a quota of 3 produces the weighted game $\langle 21 \rangle$ which has rank 3.  In general, choosing a quota of $r$ produces a weighted game of rank $r$.  Quotas of $0$ and $2^n$ correspond to the situations where all coalitions and no coalitions, respectively, are winning; we exclude these cases from our definition of simple games (cf.\  Remark~\ref{rmk:allwin}). 
\end{proof}

\begin{corollary} {\label{cor:complements}}
In the proper games poset $\Pi_n$, the games with minimal rank $2^{n-1}$ are precisely the self-dual linear games.
\end{corollary}

\begin{proof}
Recall that a game $v$ is self-dual if and only if it is both proper and strong, or equivalently, for each winning coalition $A$ in $v$, its complement $A^c$ must be losing in $v$.  Therefore every self-dual game has rank $2^{n-1}$ and is proper.

Conversely, let $v$ be a proper linear game with rank $2^{n-1}$ in $\Pi_n$.  Then precisely one coalition from each complement pair $\{A, A^c\}$ is winning in $v$, since at most one of $A$ and $A^c$ can be winning in $v$.  Therefore $v$ is self-dual.
\end{proof}

\begin{remark}
A natural question arises:  are all linear games with `middle' rank $2^{n-1}$ self-dual?  For $n \leq 5$ players, the answer is yes, since the posets $\Pi_n$ and $J_n^+$ coincide.  For $n \geq 6$ players however, the answer becomes no, since $\Pi_n$ is strictly contained in $J_n^+$.  We observe that in $J_6$ there exist 41 games of `middle' rank $32$; only 21 are proper -- these are the self-dual ones. The other 20, listed in Appendix~\ref{app:unweighted6}, are improper and occur in dual pairs.  
\hfill{$\Diamond$}
\end{remark}

\begin{proposition} \label{prop:covers}
A linear game with $k$ shift-minimal winning coalitions is covered by precisely $k$ elements in $J_n$.  If the game is proper, the statement holds also in $\Pi_n$.
\end{proposition}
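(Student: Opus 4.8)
The plan is to translate the covering relation in $J_n$ into the language of filters and then count directly. Recall that $X$ covers $v$ in $J_n$ exactly when $W_v = W_X \cup \{A\}$ for a single coalition $A \notin W_X$; equivalently, $W_X = W_v \setminus \{A\}$ with $A \in W_v$. Since a linear game is the same data as its filter of winning coalitions in $M(n)$, the elements $X$ covering $v$ are in bijection with those coalitions $A \in W_v$ for which $W_v \setminus \{A\}$ is again the filter of a linear game. So I would first reduce the proposition to the question: for how many $A \in W_v$ is $W_v \setminus \{A\}$ still upward closed in $M(n)$?

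The key step is the observation that $W_v \setminus \{A\}$ remains a filter if and only if $A$ is a \emph{minimal} element of $W_v$. Indeed, if $A$ is minimal then no $x \in W_v \setminus \{A\}$ lies below $A$, so deleting $A$ cannot destroy upward closure; conversely, if some $B \in W_v$ satisfies $B < A$, then $B$ survives while $A \geq B$ is removed, violating upward closure. By definition the minimal elements of the filter $W_v$ are precisely its generators, i.e. the shift-minimal winning coalitions of $v$. Distinct generators $A \neq A'$ give distinct filters $W_v \setminus \{A\} \neq W_v \setminus \{A'\}$, and every covering element arises this way; moreover $\lvert W_v \setminus \{A\}\rvert = \lvert W_v\rvert - 1$, so the rank increases by exactly one and each resulting $X$ is a genuine cover of $v$ rather than merely an element lying above it. This yields a bijection between the elements covering $v$ in $J_n$ and the $k$ shift-minimal winning coalitions of $v$.

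For the parenthetical claim about $\Pi_n$, I would invoke that $\Pi_n$ is an \emph{induced} subposet of $J_n$, so it suffices to check that every cover $X$ of a proper game $v$ is again proper. As $W_X = W_v \setminus \{A\} \subseteq W_v$, passing from $v$ to its cover only deletes winning coalitions, so no new complementary pair of winning coalitions can appear and properness is inherited. Hence all $k$ covers remain proper, and because every element covering a proper game in $J_n$ is itself proper, the covers of $v$ computed in $\Pi_n$ coincide with those in $J_n$, giving the same count $k$.

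The one genuine obstacle is the maximal element $\langle N \rangle$, the game in which only the grand coalition wins: it has a single generator, namely $N$ itself, yet deleting $N$ produces the empty filter, so $\langle N \rangle$ is covered by nothing in $J_n$ as we have defined it, the all-losing game of rank $2^n$ having been excluded (cf.\ Remark~\ref{rmk:allwin}). I would dispose of this either by stating the count in the lattice extended to include the rank-$2^n$ game — where removing the generator $N$ produces exactly that game and restores the bijection — or by flagging $\langle N \rangle$ as the unique exception. No analogous failure occurs elsewhere: for any game other than $\langle N \rangle$ the grand coalition $N$ is non-minimal and is therefore never the deleted generator, so each $W_v \setminus \{A\}$ still contains $N$ and omits $\emptyset$, and hence is a legitimate simple game.
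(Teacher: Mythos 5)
Your proof is correct and takes essentially the same route as the paper's: covers of $v$ in $J_n$ are exactly the filters obtained by deleting one shift-minimal winning coalition (generator) from $W_v$, and since passing upward only deletes winning coalitions, properness is inherited, so the count is unchanged in $\Pi_n$. Your flagging of $\langle N \rangle$ is a legitimate refinement the paper silently overlooks: because the all-losing game of rank $2^n$ is excluded (Remark~\ref{rmk:allwin}), the consensus game has one generator but zero covers in $J_n$, so it is a genuine exception to the statement as written.
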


\begin{proof}
Let $v$ be a linear game with $k$ shift-minimal winning coalitions.  The linear games covering $v$ in $J_n$ are the filters of $M(n)$ obtained by removing exactly one of the shift-minimal winning coalitions from $W_v$.  Since there are $k$ generators which can be removed, there are $k$ different filters covering $v$ in $J_n$.

If $v$ is proper, removing a winning coalition will retain properness, so proper games are only covered by proper games.  Thus our result holds in $\Pi_n$ as well.
\end{proof}

We observe that Proposition~\ref{prop:covers} is not true for weighted games.  For example, the game $\langle 987,8741 \rangle$ in $\w_9^+$ is weighted with two shift-minimal winning coalitions: $987$ and $8741$.  It is covered only by $\langle 987, 9741,8751,8742 \rangle$ and not by $\langle 8741 \rangle$ since $\langle 8741 \rangle$ is not weighted.

We now describe a useful inclusion of $J_n$ into $J_{n+1}$.  Recall from the end of Section~\ref{sec:background} that the game $v = \langle A_1, A_2, \ldots \rangle$ on $n$ players induces the  game $\tilde{v}$ by adding a dummy player.  So we have a map $J_n \hookrightarrow J_{n+1}$ that sends $v$ to $\tilde{v}$.  The rank of $\tilde{v}$ is twice that of $v$.  Thus, we may conclude that if a game has $k$ dummies, its rank must be a multiple of $2^k$.
Since induced games preserve weightedness and properness, this map also produces the inclusions $\w_n \hookrightarrow \w_{n+1}$ and $\Pi_n \hookrightarrow \Pi_{n+1}$.

\subsection{Enumerating linear games}

The tasks of counting linear and weighted games are difficult since the number of each grows rapidly as the number of players increases; full results are known only for $n \leq 9$ players~\cite{KS95, MTB70}.  The enumeration of simple games has been studied by mathematicians for over a century, beginning with Dedekind's 1897 work in which he determined the number of simple games with four or fewer players.  
 Recently Freixas and Puente~\cite{FrePue08} investigated linear games with one shift-minimal winning coalition, Kurz and Tautenhahn~\cite{KurTau11} have enumerated linear games with two shift-minimal winning coalitions, and Freixas and Kurz~\cite{FK11} have provided a formula for the number of weighted games with one shift-minimal winning coalition and two types of players.  We extend this research to proper linear games by counting those with one shift-minimal winning coalition.

Posets provide a natural tool for ensuring that we have enumerated all linear games for $n$ players.  For a linear game $v$, its set of winning coalitions $W_v$ is the filter in $M(n)$ generated by the shift-minimal winning coalitions in $v$.  A new linear game can be obtained by either removing a shift-minimal winning coalition from $W_v$ or adding a new coalition to $W_v$ which is covered only by elements of $W_v$ but is not in $W_v$, i.e., a \emph{shift-maximal losing coalition} of $v$.  This procedure either adds one 
(or subtracts one, respectively) to the rank (resp. from the rank) of the linear game.  To obtain a new linear game with the same rank as $v$, perform both operations: remove a winning coalition $A$ from $W_v$ and add a new winning coalition which is not in $W_v \setminus \{A\}$ but is covered only by elements of $W_v \setminus \{A\}$.  Note that it may not always be possible to perform both of these operations.

\begin{theorem}
For $n$ players, the number of proper linear games generated by exactly one shift-minimal winning coalition is 
\begin{equation} \label{eq:1gen}
2^n - {n \choose \lfloor n/2 \rfloor}.
\end{equation}
The coalitions $A$ for which a prinipal filter, $\langle A \rangle$, of $M(n)$ is a proper game are precisely the subsets of $[n]$   which contain $k$ of the largest $2k-1$ numbers in $[ n ]$ for some $k \le n$.
\end{theorem}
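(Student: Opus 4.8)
The plan is to count the \emph{improper} single-generator games and subtract. Since every nonempty coalition $A$ generates a distinct principal filter of $M(n)$ whose unique shift-minimal winning coalition is $A$ itself, the single-generator linear games are in bijection with the nonempty coalitions, so there are exactly $2^n-1$ of them. It therefore suffices to show that exactly $\binom{n}{\lfloor n/2\rfloor}-1$ are improper, which leaves $2^n-\binom{n}{\lfloor n/2\rfloor}$ proper ones.

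The first and key step is to reduce properness to a single comparison in $M(n)$: I claim $\langle A\rangle$ is improper if and only if $A^c \geq A$ (equivalently, $A^c$ is winning). One direction is immediate by taking the complementary pair $\{A, A^c\}$. For the converse, suppose some complementary pair $B, B^c$ are both winning, so $B \geq A$ and $B^c \geq A$. Write $A=\{a_1 > \cdots > a_k\}$ and set $T_i = \{j \in [n] : j \geq a_i\}$. By Definition~\ref{partialorder}, $B\ge A$ forces the $i$th largest element of $B$ to be $\geq a_i$, hence $|B \cap T_i| \geq i$; likewise $|B^c \cap T_i| \geq i$, and adding gives $|T_i| \geq 2i$. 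Since $A \cap T_i = \{a_1,\dots,a_i\}$ has exactly $i$ elements, we obtain $|A^c \cap T_i| = |T_i| - i \geq i$, which says the $i$th largest element of $A^c$ is $\geq a_i$. As this holds for every $i \leq k$ and $|A^c| = n-k \geq k$ (because $|B|,|B^c|\ge k$), we conclude $A^c \geq A$. This threshold-counting reduction is the main obstacle; once it is in hand the rest is bookkeeping.

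Next I would translate $A^c \geq A$ into a ballot condition. Reading the positions $n, n-1, \dots, 1$ from the top down and recording a down-step when the position lies in $A^c$ and an up-step when it lies in $A$, the relation $A^c \geq A$ is equivalent (checking both directions by evaluating the inequalities $|A^c\cap T|\ge |A\cap T|$ at the thresholds $t=a_i$) to the resulting $\pm 1$ path never rising above $0$, i.e. every top-down prefix containing at least as many elements of $A^c$ as of $A$. Thus $\langle A\rangle$ is \emph{proper} exactly when some prefix has a strict $A$-majority; a short parity argument (if a strict majority occurs in a prefix of even length $2k$, deleting its smallest element leaves one in the length-$(2k-1)$ prefix) reduces this to an odd-length prefix and yields precisely the stated criterion: $A$ contains at least $k$ of the largest $2k-1$ numbers in $[n]$ for some $k$.

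Finally I would count the improper games, which by the above are the subsets whose top-down reading stays weakly below the diagonal. By the standard ballot/lattice-path enumeration (via the reflection principle or the cycle lemma), the number of $\pm 1$ paths of length $n$ that never go positive is $\binom{n}{\lfloor n/2\rfloor}$. Exactly one of these, the all-down path, corresponds to $A=\emptyset$, which is not a legal coalition. Hence there are $\binom{n}{\lfloor n/2\rfloor}-1$ improper single-generator games, and subtracting from the total $2^n-1$ gives $2^n-\binom{n}{\lfloor n/2\rfloor}$ proper ones, establishing both the formula and the characterization.
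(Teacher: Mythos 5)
Your proof is correct and follows essentially the same route as the paper: reduce properness of $\langle A \rangle$ to the single comparison $A^c \not\geq A$ in $M(n)$, translate that into a top-down coin-flip/ballot majority condition, and count. You are in fact more thorough than the paper on the two steps it leaves implicit --- your threshold-counting argument that \emph{any} winning complementary pair $B, B^c$ forces $A^c \geq A$ (the paper merely asserts this reduction), and your reflection-principle enumeration of the $\binom{n}{\lfloor n/2 \rfloor}$ never-positive paths, where the paper instead cites OEIS sequence A045621.
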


\begin{proof}
We consider the $2^n$ different games $\langle A \rangle$, where $A \subset [n]$.  Such a game is proper if and only if $A^c \ngtr A$, i.e., if the complement of $A$ does not lie in the filter $\langle A \rangle$.  By Definition~\ref{partialorder}, $A^c \ngtr A$ is equivalent to having the $k$th element of $A$ be greater than the $k$th element of $A^c$ for some $k \leq n$.  Thus, $\langle A \rangle$ is proper if and only if $A$ contains $k$ of the largest $2k-1$ numbers in $[n]$ (for some $k\leq n$).  

This is equivalent to the number of ways to flip a fair coin $n$ times so that a majority of heads had occurred at some point, which is enumerated as sequence A045621 in the Online Encyclopedia of Integer Sequences~\cite{OEIS} and given by formula \eqref{eq:1gen} above.
\end{proof}


\section{The geometry of weighted voting realizations} \label{sec:geometry}

We now study the geometry of realizations of weighted games.  Recall from Section~\ref{sec:background} that since weighted voting is scale invariant, we normalize the weights so that they sum to 1.  Also, $\Delta_n$ denotes the $(n-1)$-dimensional simplex of normalized weights for weighted, $n$-player games and $\C_n = (0,1] \times \Delta_n$ denotes the space of all realizations of such games.  We envision $\C_n \subset \R^n$ depicted with coordinate $q$ pointing upwards (in the vertical direction) and will refer to `top' and `bottom' features based on appropriate $q$ values.

Consider all of the realizations  in $\C_n$ for a weighted game $v \in \w_n$; these points define a polytope $P_v$ in $\C_n$.  The polytopes $P_v$ encode a rich amount of information about weighted games; the goal of this section is to describe the geometry of weighted voting and its connections with posets and hierarchies.

By a \emph{polytope}, we mean the generalization of polygons or polyhedra to bounded $k$-dimensional objects.  In $\R^k$, each polytope is bounded by a finite number of hyperplanes; these define $(k-1)$-dimensional subpolytopes called \emph{facets}.  We do not assume that all polytopes are convex; a convex polytope may be viewed as the convex hull of a finite set of points.  Each $P_v$ is in fact convex, as we demonstrate in Proposition~\ref{prop:convex}.  The polytopes $P_v$ are neither open nor closed; they contain some but not all of their facets, as we will discuss later in this section.

Let us begin by examining the geometry of the configuration regions:  $\C_1$ is a line segment of quotas above the point $p_1$ (where $w_1=1$) while $\C_2$ is the rectangle $(0,1] \times  \overline{p_2 p_1}$.  Notice that $\C_1$ embeds naturally into $\C_2$.  Figure~\ref{fig:config3} depicts $\C_3$, which is a triangular prism.  Notice that  $\C_2$ embeds into $\C_3$ as the back facet, and $\C_1$ embeds as the rightmost edge.  This is true in general:  every $\C_k$ naturally embeds into $\C_n$ for $k<n$.  

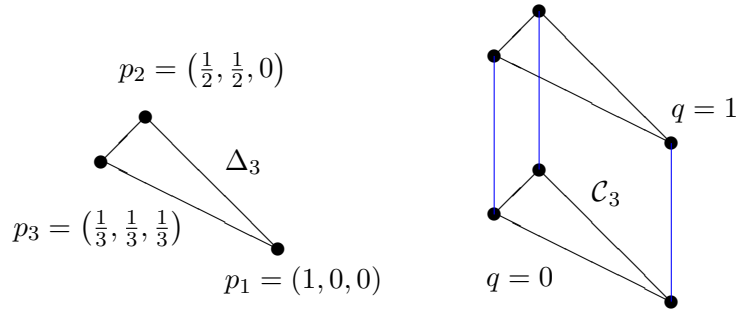
\begin{figure} [ht]
\begin{picture}(100,100)(0,0)
	\put(100,0){\line(-1,1){50}}
	\put(100,0){\line(-2,1){67}}
	\put(50,50){\line(-1,-1){17}}
	\put(33,33){\circle*{5}}
	\put(50,50){\circle*{5}}
	\put(100,0){\circle*{5}}
	\put(80,30){$\Delta_3$}
	\put(80,-15){$p_1=(1,0,0)$}
	\put(0,5){$p_3=\left(\tfrac{1}{3}, \tfrac{1}{3}, \tfrac{1}{3}\right)$}
	\put(40,65){$p_2=\left( \tfrac{1}{2}, \tfrac{1}{2}, 0\right)$}
\end{picture} 
\hspace*{0.6in}
\begin{picture}(100,100)(0,20)
	\put(100,0){\line(-1,1){50}}
	\put(100,0){\line(-2,1){67}}
	\put(50,50){\line(-1,-1){17}}
	\put(33,33){\circle*{5}}
	\put(50,50){\circle*{5}}
	\put(100,0){\circle*{5}}
	\put(100,60){\line(-1,1){50}}
	\put(100,60){\line(-2,1){67}}
	\put(50,110){\line(-1,-1){17}}
	\put(33,93){\circle*{5}}
	\put(50,110){\circle*{5}}
	\put(100,60){\circle*{5}}
{\color{blue}	\put(100,0){\line(0,1){60}}
			\put(50,50){\line(0,1){60}}
			\put(33,33){\line(0,1){60}}}
	\put(30,6){$q=0$}
	\put(100,70){$q=1$}	
	\put(70, 38){$\C_3$}
\end{picture} 
\vspace*{0.2in}
\caption{For 3 players, $\Delta_3$ is the region of normalized weights, and $\C_3$ is the configuration region of quotas and weights.}
\label{fig:config3}
\end{figure}

For a weighted game $u$ on $m$ players induced from a game $v$ on $n<m$ players, the polytope $P_v$ is the projection of $P_u$ under the natural projection $\C_m \rightarrow \C_n$.  Thus the geometry of $\C_m$ completely determines the geometry of $\C_n$.  

\subsection{Polytope structure} \label{sec:polytope}
Let us now describe how the polytopes in the configuration region $\C_n$ are formed.  First, let $\w_A$ equal the sum of the weights of players in coalition $A$.  Consider the set of points in $\C_n$ where $q=w_A$, i.e., where $A$ has precisely enough weight to win; the set of these points lies in a hyperplane $h_A$.  Unless $A$ is empty or equal to the grand coalition $[n]$, the hyperplane $h_A$ intersects $\C_n$ in a codimension one subset that slants -- its normal vector is neither horizontal nor vertical.  Observe that $h_{[n]}$ forms the top facet of $\C_n$ and $h_\emptyset$ the bottom  facet; the latter is not actually contained in $\C_n$.  

\begin{remark}
These hyperplanes respect the ordering on coalitions in $M(n)$; we have $A > B$ if and only if $h_A$ lies strictly above $h_B$ on the interior of $\C_n$.  Equivalently, $A$ and $B$ are incomparable coalitions if and only if their hyperplanes intersect on the interior of $\C_n$.  \hfill{$\Diamond$}
\end{remark}

Coalition $A$ is winning at a realization $(q:\ww)$ if and only if $q\leq w_A$, so we may visualize the points which have $A$ winning as the closed subset $X_A$ of $\C_n$ bounded above by hyperplane $h_A$.  Similarly the points which have coalition $B$ losing form the open subset $(X_B)^c =\C_n \setminus X_B$ which is bounded below by $h_B$.  Thus, for a weighted game $v$, we may view its polytope $P_v$ as the intersection of all `winning subsets' such as $X_A$ and all `losing subsets' such as $(X_B)^c$:
$$P_v = \left( \bigcap_{A \in W_v} X_A \right) \bigcap \left( \bigcap_{B \notin W_v} (X_B)^c \right).$$  

This formulation demonstrates that each polytope $P_v$ is closed on top and open on bottom and is convex.  
\begin{proposition} \label{prop:convex}
Each polytope $P_v$ associated to a weighted game $v$ is convex.
\end{proposition}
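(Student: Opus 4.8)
The plan is to exploit the explicit description of $P_v$ as an intersection already recorded above, together with the elementary fact that any intersection of convex sets is convex. The work therefore reduces to checking that each building-block set $X_A$ (for $A \in W_v$) and each losing region $\C_n \setminus X_B$ (for $B \notin W_v$) is convex, and that the ambient region $\C_n$ is convex.

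First I would verify that $\C_n = (0,1] \times \Delta_n$ is convex. The interval $(0,1]$ is convex, the simplex $\Delta_n$ is convex (it is cut out by the affine constraints in \eqref{w-order}), and a Cartesian product of convex sets is convex; hence so is $\C_n$.

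Next, for a winning coalition $A$, recall that $w_A = \sum_{i \in A} w_i$ is a linear function of the weight coordinates. Thus the defining condition $q \leq w_A$ of $X_A$ is a single affine inequality $q - w_A \leq 0$, so $X_A$ is the intersection of $\C_n$ with a closed half-space of $\R^{n+1}$, and is therefore convex. The same reasoning handles a losing coalition $B$: although we wrote its losing region as the complement $\C_n \setminus X_B$, this set is precisely $\{(q:\ww) \in \C_n : q > w_B\}$, the intersection of $\C_n$ with the open half-space $q - w_B > 0$. This is the crux of the argument and the only place a subtlety could lurk: the set-theoretic complement of a convex set need not be convex, but here the losing region is not such a complement in the ambient space; it is itself a single open half-space slice of $\C_n$, hence convex.

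Finally, $P_v$ is a finite intersection of these convex sets, so it is convex. I expect no genuine obstacle: the one point deserving care is the observation that the ``open on the bottom'' losing regions are honest half-spaces rather than ambient complements, which is exactly what keeps convexity intact.
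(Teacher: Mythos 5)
Your proof is correct and follows essentially the same route as the paper, which presents $P_v$ via the intersection formula $P_v = \bigl( \bigcap_{A \in W_v} X_A \bigr) \cap \bigl( \bigcap_{B \notin W_v} (X_B)^c \bigr)$ and treats convexity as an immediate consequence of each factor being a half-space slice of $\C_n$. You have merely made explicit the one point the paper leaves implicit---that each ``complement'' $(X_B)^c$ is not an arbitrary complement of a convex set but is itself the intersection of $\C_n$ with an open half-space $q > w_B$---which is exactly the right detail to flag.
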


We will be interested in what occurs by moving along a vertical line in $\C_n$ from a realization $z=(q: \ww)$ in $P_v$.  These motions are equivalent to changing the quota while fixing the weights.
\begin{itemize}
\item Moving upwards from $z$ (increasing the quota) guarantees that each losing coalition will remain losing.  A coalition $A$ winning at $z$ remains winning until after the line crosses the hyperplane $h_A$.

\item Moving downwards from $z$ (decreasing the quota) guarantees that each winning coalition will remain winning.  A coalition $B$ losing at $z$ remains losing until the line intersects the hyperplane $h_B$.
\end{itemize}

\begin{proposition} \label{ndim}
Let $v$ be a weighted game on $n$ players.  Each polytope $P_v$ is $n$-dimensional; that is, it has full dimension in the configuration region $\C_n$.
\end{proposition}

\begin{proof}
Since $v$ is weighted, there exists some representation $z_0=(q_0: \ww)$ for the game, so each polytope $P_v$ includes at least one point.  Move `upwards' by fixing the weight vector $\ww$ and increasing the quota until reaching the top boundary of $P_v$ at some point $z=(q_1: \ww)$; 
n.b., $z$ might equal $z_0$.

We first show that $z$ itself lies in the polytope $P_v$.  Points in polytope $P_v$ all satisfy the same inequalities:  $q \leq w_A$ for any winning coalition $A$ and $q > w_B$ for any losing coalition $B$.  Moving upwards from $z_0$, we first encounter the boundary of $P_v$ at the lowest point where one or more of the winning inequalities becomes an equality $q=w_A$.  The point $z$ satisfies the same inequalities as $z_0$ does - if $z_0 \neq z$, all inequalities at $z_0$ are strict, whereas if $z_0 = z$ then at least one at $z$ is weakly satisfied.  Thus, $z$ is a realization of $v$.

Assume at $z$ there are $k$ inequalities that are weakly satisfied, corresponding to coalitions $A_1, \ldots, A_k$, with weight equal to quota $q_1$.  The remaining $2^n - k$ hyperplanes lie either above or below point $z$; let $\delta$ be the minimum distance down to the next highest hyperplane(s) and let $r=(q_1-\delta : \ww)$.   As we travel downwards from $z$, no coalition will ever change from winning to losing; only when we encounter the next hyperplane does some coalition(s) change from losing to winning.  Hence, all points moving down from $z$ are in $P_v$ until we reach a quota of $q_1-\delta$ at $r$. 

Recall that we are using coordinates $\{w_n, \ldots, w_2\}$ on $\Delta_n$ with $w_1 = 1 - w_n -\cdots - w_2$.  Writing out the hyperplane equation $q=w_A$, we see that its slope  in direction $w_i$, for $i>1$, is one of $\{+1, 0, -1\}$:
\begin{itemize}
\item $+1$ if $\v{i}\in A$, but $1 \notin A$,
\item $-1$ if $i \notin A$, but $1 \in A$,
\item $0$ if both $\v{i}$ and $1$ are in $A$ or neither is in $A$.
\end{itemize}
Thus we know that the interior of the diamond depicted below formed by points $z, r$, and $ z_i^\pm = (q_1-\frac{\delta}{2} : w_n, \ldots, w_i \pm \frac{\delta}{2}, w_{i-1}, \ldots)$ lies in $P_v$.  
\begin{center}
\begin{picture}(50,50)
\put(25,5){\circle{5}}
\put(25,45){\circle*{5}}
\put(5,25){\circle{5}}
\put(45,25){\circle{5}}
\put(27,7){\line(1,1){16}}
\put(23,7){\line(-1,1){16}}
\put(25,45){\line(1,-1){18}}
\put(25,45){\line(-1,-1){18}}
\put(30,3){{\scriptsize $r$}}
\put(26,50){{\scriptsize $z$}}
\put(-10,26){{\scriptsize $z_i^-$}}
\put(50,26){{\scriptsize $z_i^+$}}
\put(38,0){\vector(1,0){25}}
\put(48,3){{\scriptsize $w_i$}}
\end{picture}
\end{center}
Thus, the convexity of $P_v$ implies that  it contains the interior of the $n$-dimensional polytope  (a cross-polytope) spanned by points $z, r, z_2^\pm, \ldots, z_n^\pm$.  So we have shown $P_v$ is $n$-dimensional.
\end{proof}

The boundary of polytope $P_v$ is comprised of three different types of facets.  We will count these in Theorem~\ref{thm:facets} using the poset $\w_n$ and the hierarchy of $v$.
\begin{enumerate}
\item \emph{top facets} -- each is associated to a hyperplane $h_A$ for some winning coalition $A$; the top facet's interior is contained in $P_v$.

\item \emph{bottom facets} -- each is associated to a hyperplane $h_A$ for some losing coalition $A$; a bottom facet is disjoint from $P_v$;

\item \emph{vertical facets} -- each lies above a codimension one subsimplex of $\Delta_n$; the interior of a vertical facet is contained in $P_v$.  These are where the polytope intersects the boundary of the configuration space.
\end{enumerate}

\begin{example}{\label{ex:31}}
The game $\langle 31 \rangle$ in Figure~\ref{fig:p1p3}, for example, has top facet $h_{31}$, bottom facets $h_{21}$ and $h_3$, and vertical facet contained in $\overline{p_1 p_3}$. 
\hfill{$\Diamond$}
\end{example}

Higher codimension elements of $P_v$ are formed by the intersection of two or more facets.  They are included in $P_v$ if and only if no bottom facets are part of the intersection.  

Now we turn our attention back to dual games.  A game and its dual share many properties, including power compositions, weightedness, and congruent polytope interiors.

\begin{theorem} \label{thm:dualgeom}
The interior of $P_v$, the polytope associated to the weighted game $v$, is the reflection in $\C_n$ about the hyperplane $q=0.5$ of the interior of the polytope $P_{v^*}$ associated to its dual game $v^*$.
\end{theorem}

\begin{proof}
Let $(q: \ww)$ be a point in the interior of $P_v$.  We prove that $(1-q: \ww)$ lies in the interior of $P_{v^*}$.  Consider a coalition $A$ for game $v$.  Then $A$ is winning in $v$ if and only if $A^c$ is losing in $v^*$, so the weight of $A$ satisfies  $w_A > q$ if and only if $w_{A^c} < 1-q$.  (This establishes the well-known fact that $v^*$ is weighted if and only if $v$ is.)
\end{proof}

As an immediate corollary, we gain another characterization of self-dual weighted games, which we showed in Corollary~\ref{cor:complements} are precisely the ones that lie in the middle rank $2^{n-1}$ in $\w_n$.
\begin{corollary}
The weighted game $v\in \w_n$ is self-dual (i.e., $v=v^*$) if and only if its polytope $P_v$ is symmetric about the hyperplane $q=0.5$.
\end{corollary}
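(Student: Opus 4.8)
The plan is to read the corollary off directly from Theorem~\ref{thm:dualgeom}. Let $R$ denote reflection of $\C_n$ across the hyperplane $q=0.5$, so that $R(q:\ww) = (1-q:\ww)$; Theorem~\ref{thm:dualgeom} asserts precisely that $\mathrm{int}(P_{v^*}) = R\bigl(\mathrm{int}(P_v)\bigr)$. The corollary is then a short formal consequence of this identity together with the fact that distinct weighted games have polytopes with disjoint interiors.

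For the forward implication I would assume $v=v^*$, so that $P_v = P_{v^*}$. Substituting into Theorem~\ref{thm:dualgeom} yields $\mathrm{int}(P_v) = R\bigl(\mathrm{int}(P_v)\bigr)$, which is exactly the assertion that $P_v$ is symmetric about $q=0.5$. Since $P_v$ is $n$-dimensional by Proposition~\ref{ndim}, reflection-invariance of its dense, full-dimensional interior is the appropriate interpretation of symmetry here.

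For the converse I would assume $\mathrm{int}(P_v) = R\bigl(\mathrm{int}(P_v)\bigr)$ and apply Theorem~\ref{thm:dualgeom} once more, obtaining $\mathrm{int}(P_v) = R\bigl(\mathrm{int}(P_v)\bigr) = \mathrm{int}(P_{v^*})$. The remaining step is the injectivity of the correspondence $v \mapsto P_v$: any realization in the interior of a polytope satisfies only strict inequalities $q < w_A$ or $q > w_A$, hence determines an unambiguous collection of winning coalitions equal to $W_v$, so two weighted games with a common nonempty interior must coincide, giving $v=v^*$. I expect no serious obstacle, the only subtlety being the open/closed asymmetry of the facets: $R$ interchanges the closed top facets of $P_v$ with its open bottom facets, so the closed set $P_v$ is never literally $R$-invariant. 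This is exactly why both Theorem~\ref{thm:dualgeom} and this corollary are phrased in terms of interiors, and I would make sure that ``symmetric about $q=0.5$'' is read accordingly.
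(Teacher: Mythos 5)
Your proposal is correct and follows exactly the route the paper intends: the corollary is stated there as an immediate consequence of Theorem~\ref{thm:dualgeom}, with no further argument given, and your two applications of that theorem plus the disjointness of distinct games' polytope interiors are precisely the ``immediate'' steps being elided. Your observation that ``symmetric'' must be read at the level of interiors (since reflection swaps the closed top facets with the open bottom facets) is a worthwhile clarification that the paper glosses over, not a deviation from its approach.
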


\subsection{Polytopes and hierarchies}
In this section, we describe how the hierarchy of players in a weighted game $v$ is related to the polytope $P_v$.  We begin with several ideas which will be useful in future proofs and then specify a one-to-one correspondence between $(k-1)$-dimensional subsimplices of $\Delta_n$ and compositions into $k$ parts of natural numbers less than or equal to $n$.  

\begin{definition}
Let $\pi(P_v)$ be the vertical projection of the polytope $P_v$ onto $\Delta_n$; we call $\pi(P_v)$ the \emph{footprint} of $v$.  
\end{definition}

The footprint is comprised of all weights that are part of some realization of $v$.

\begin{lemma}{\label{EqualWeights}}
There exists a realization of $v$ in which all players in the same symmetry class have the same weight.
\end{lemma}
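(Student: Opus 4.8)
The plan is to produce the desired realization by \emph{symmetrizing} an arbitrary one: start from any representation of $v$, average its weight vector over all permutations that shuffle voters within their symmetry classes, and verify that the averaged weights both realize $v$ and are constant on each class.

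First I would fix a realization $(q:\ww)$ of $v$, which exists because $v$ is weighted. Let $G$ be the group of permutations $\sigma$ of $N$ that map each symmetry class to itself (so $G$ is a direct product of symmetric groups, one per class). The key combinatorial step is to show that every $\sigma \in G$ preserves $W_v$. It suffices to check this for a transposition $(\v{i}\;\v{j})$ of two voters in a common class, since such transpositions generate $G$. For such $\v{i} \sim \v{j}$ and any coalition $C$: if $C$ contains both or neither of $\v{i},\v{j}$ the transposition fixes $C$; while if $C = S\cup\{\v{i}\}$ with $\v{j}\notin C$, then the definition of equal desirability gives $S\cup\{\v{i}\}\in W_v \iff S\cup\{\v{j}\}\in W_v$, so $C\in W_v$ if and only if its image is. Hence $\sigma(C)\in W_v \iff C\in W_v$ for all $\sigma\in G$.

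Next, for the fixed quota $q$, consider the set $U_q$ of all nonnegative weight vectors $\ww'$ for which a coalition $A$ is winning (that is, $w'_A \ge q$) exactly when $A \in W_v$. This is an intersection of closed and open half-spaces, hence convex; and the computation $w'_A = w_{\sigma^{-1}(A)}$ together with the previous paragraph shows that $G$ acts on $U_q$. I would then set $\bar\ww = \frac{1}{|G|}\sum_{\sigma\in G}\sigma\cdot\ww$; convexity of $U_q$ places $\bar\ww\in U_q$, so $(q:\bar\ww)$ realizes $v$, and by construction $\bar\ww$ is constant on each symmetry class. Finally, to land this realization back in $\C_n$ I would normalize and verify the ordering: averaging preserves the coordinate sum, and since strict desirability forces a strict weight inequality in every realization (by the weight--desirability compatibility noted earlier), the class averages are ordered consistently with the labels, so $\bar\ww\in\Delta_n$.

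The main obstacle to watch for is exactly this ordering issue: a permuted vector $\sigma\cdot\ww$ need not be weakly decreasing, so it generally does \emph{not} lie in $P_v\subset\C_n$, and one cannot simply invoke the convexity of $P_v$ from Proposition~\ref{prop:convex}. Working instead with the fixed-$q$ realization set $U_q$ in the full weight space sidesteps this, and the observation that distinct symmetry classes have strictly separated weights guarantees that the \emph{average} is ordered even though the individual summands need not be.
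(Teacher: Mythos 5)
Your proof is correct, but it takes a genuinely different route from the paper's, even though both arguments arrive at the same final vector (each voter's weight replaced by the average $a_i$ of its symmetry class). The paper verifies directly, class by class and coalition by coalition, that this replacement preserves $W_v$: if a winning coalition $B$ contains $m$ voters of class $[i]$, equal desirability lets one substitute the $m$ \emph{weakest} voters of that class while staying winning, and since $a_i$ is at least the average of the $m$ weakest weights in the class, passing to averaged weights can only increase the weight of $B$ relative to that substituted coalition; the dual substitution (strongest voters) keeps losing coalitions losing. You instead prove that the group $G$ of class-preserving permutations preserves $W_v$ and then invoke convexity of the fixed-quota realization set $U_q$ in \emph{unordered} weight space to place the orbit barycenter in $U_q$. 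The combinatorial input is identical --- equal desirability permits swapping equally desirable voters --- but your mechanism (symmetrization plus convexity) replaces the paper's explicit substitution-and-inequality computation; it is more conceptual and is the standard averaging trick for invariant convex sets. Your caution about the ordering issue is well placed and correctly resolved: the permuted vectors $\sigma\cdot\ww$ generally leave $\Delta_n$, so convexity of $P_v$ (Proposition~\ref{prop:convex}) cannot be applied directly, and moving to $U_q$ is the right fix, whereas the paper sidesteps the issue by never leaving the ordered region. One small economy: the strict weight separation between distinct classes, which you derive from strict desirability, is not actually needed to conclude $\bar\ww\in\Delta_n$ --- since the original $\ww\in\Delta_n$ is weakly decreasing and symmetry classes are intervals of consecutive voters, the class averages are automatically weakly decreasing. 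Note also that your method recovers the remark following the lemma (averaging any run of consecutive voters within a class stays in $P_v$ at the same quota) by averaging over the corresponding subgroup of $G$.
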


\begin{proof}
Given a realization $(q: \ww)$ of $v$, our strategy is to replace the weight $w_i$ by the average of the weights of all players in the same symmetry class $[i]$ as player $\v{i}$.  We claim this operation preserves the set of winning coalitions $W_v$.  Assuming the claim, we perform this operation for each symmetry class and thereby constructively prove the lemma.

Consider a coalition $B \in W_v$ which contains $m$ players from class $[i]$.  By the definition of player symmetry, $B$ will remain winning if we replace these players by the weakest $m$ players in class $[i]$.  Furthermore, $B$ will remain winning if we reassign weights to the players in class $[i]$, so long as the new weights do not cause the total weight of the weakest $m$ players in $[i]$ to decrease.  The new weights must also respect the ordering on players.  Replacing each weight by the average accomplishes both conditions, and thus causes $B$ to remain winning.

A similar argument shows that losing coalition $C$ will remain losing if we average weights within a symmetry class.  Thus, $W_v$ is preserved by the averaging operation and the claim is proven.
\end{proof}

Note that we can average the weights of any two (or more) consecutive players in the same symmetry class and still obtain a realization of $v$ using the same quota.

Let $\sigma$ be a $(k-1)$-dimensional subsimplex whose vertices are $p_{i_1},  p_{i_2},  \ldots, p_{i_k}$, where $i_1 < i_2 < \hdots < i_k$.  Recall that $p_i = \left(\tfrac{1}{i}, \ldots, \tfrac{1}{i}, 0, \ldots, 0 \right)\in \Delta_n$ has $i$ nonzero coordinates.  
This implies that for any point $\ww$ in $\sigma$, the first $i_1$ coordinates of ${\bf w}$ are equal, as are the next $i_2-i_1$ coordinates, and so forth.
Thus, the coordinates of any point $\ww$ in $\sigma$ can be grouped into $k$ different classes of equal values; if $i_k \neq n$, there is one additional class of $n-i_k:=m_0$ coordinates, which are all $0$.  Let $m_1 = i_1$ and $m_j=i_j-i_{j-1}$ for $j>1$.  We refer to the composition $\ca_{\sigma}:=(m_1,m_2, \hdots , m_k)$ as the \emph{composition associated to $\sigma$}.  This establishes a bijection between $(k-1)$-dimensional subsimplices of $\Delta_n$ and compositions of $n-m_0$ into $k$ parts.  By appending $m_0$ to $\ca_{\sigma}$, we form the \emph{extended composition associated to $\sigma$}, denoted $\eca_{\sigma}$.

We observe that simplex $\sigma \subset \Delta_n$ contains simplex $\tau \subset \Delta_n$ if and only if composition $\gamma_{ \sigma}'$ refines $\gamma'_{ \tau}$.

The following theorem establishes a relation between polytopes and power compositions.  

\begin{theorem} \label{thm:hierarchy}
For a weighted game $v$, let $\sigma_v \subset \Delta_n$ be the smallest dimensional subsimplex that intersects the footprint of $v$.  The power composition of $v$ is the composition associated to $\sigma_v$.
\end{theorem}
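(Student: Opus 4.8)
The plan is to identify the face $\sigma_0 \subset \Delta_n$ whose associated composition is the power composition $(n_1,\ldots,n_k)$ of $v$ (so that its extended composition records the $n_0$ dummies as $m_0$), and to show that $\sigma_0$ is met by the footprint while being contained in \emph{every} face that the footprint meets; it is then the unique minimal, hence smallest-dimensional, subsimplex intersecting the footprint. Two facts tying weights to desirability in a weighted game drive the argument: in every realization, voters in distinct symmetry classes receive distinct weights (since $w_i = w_j$ forces $\v{i} \sim \v{j}$), and a voter of weight $0$ lies in no minimal winning coalition, so each non-dummy class carries strictly positive weight while the dummy class, being strictly least desirable, is strictly lightest.

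First I would produce a realization landing in the relative interior of $\sigma_0$. Since the dummies of $v$ form a single symmetry class and $v$ is induced by adjoining them to the dummy-free game $\bar v$ on $n-n_0$ voters, any realization of $\bar v$ extends to one of $v$ by appending $n_0$ zero weights. Applying Lemma~\ref{EqualWeights} to $\bar v$ yields a realization in which the $k$ nontrivial classes carry equal weights within each class; because distinct classes must carry distinct weights, these $k$ common values are strictly decreasing and positive. Appending the $n_0$ zeros then gives a weight vector $\ww$ whose coordinates form exactly $k$ equal positive blocks of sizes $n_1,\ldots,n_k$ followed by $n_0$ zeros, which is precisely the relative interior of $\sigma_0$ under the block description preceding the theorem. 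Hence the footprint meets $\sigma_0$.

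Next I would show that every realization $\ww$ of $v$ lies in a face of $\Delta_n$ containing $\sigma_0$. Grouping the coordinates of $\ww$ into maximal blocks of equal value, the vertices of the smallest face $F_\ww$ containing $\ww$ sit at the cumulative sizes of its positive blocks. Because equal weights force equal desirability, no block can straddle a symmetry-class boundary, so each of the $k-1$ boundaries between consecutive nontrivial classes is a breakpoint of $\ww$, as is the boundary between the weakest nontrivial class and the dummies (strict because these are distinct symmetry classes and the nontrivial class has positive weight). Consequently the cumulative positions $n_1,\, n_1+n_2,\, \ldots,\, n-n_0$ — exactly the vertices of $\sigma_0$ — all occur among the vertices of $F_\ww$, so $\sigma_0 \subseteq F_\ww$. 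Combined with the previous paragraph, this makes $\sigma_0$ the unique smallest-dimensional subsimplex meeting the footprint, and its associated composition is $(n_1,\ldots,n_k)$, as claimed.

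The main obstacle is the bookkeeping around dummies together with the direction of the weight–desirability implication. The crux is that equal weights \emph{force} equal desirability: this prevents any realization from being coarser than the symmetry partition and thereby forces $F_\ww \supseteq \sigma_0$. Dually, one must take care to exhibit a realization with the dummies set to weight exactly $0$ — not merely equal weights, which Lemma~\ref{EqualWeights} alone would supply — in order to land on $\sigma_0$ rather than on a strictly larger face. Verifying that position $n-n_0$ is genuinely a breakpoint in every realization, i.e., that non-dummy classes never degenerate to weight $0$ and dummies are never as heavy as the lightest nontrivial class, is where the positive-weight and least-desirability observations become essential.
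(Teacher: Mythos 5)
Your proof is correct, and it is organized quite differently from the paper's. The paper routes Theorem~\ref{thm:hierarchy} through Lemma~\ref{IntRes}, which characterizes \emph{all} subsimplices meeting the footprint: $\tau$ meets $\pi(P_v)$ if and only if the extended composition of $\tau$ refines the extended power composition $\overline{n}'$. Uniqueness of the smallest such simplex is deduced from closure under intersection (common refinement of compositions), and the equality of compositions then follows from refinement in both directions. You instead fix the candidate simplex $\sigma_0$ attached to the power composition and prove exactly two things: the footprint meets $\sigma_0$, and the minimal face containing any realization contains $\sigma_0$; minimality and uniqueness are then automatic, with no appeal to the refinement calculus. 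The most substantive difference is the existence step. The paper reaches a realization above the relevant simplex by applying Lemma~\ref{EqualWeights} and then perturbing inside the $n$-dimensional polytope; this is delicate when $v$ has dummies, since Lemma~\ref{EqualWeights} only equalizes the dummies' weights rather than making them zero, and no short perturbation moves a positive common dummy weight to exactly zero. Your reduction to the dummy-free game $\bar v$ followed by appending exact zeros handles this point cleanly, modulo the standard facts (asserted in the paper) that induced games preserve weightedness and nontrivial symmetry classes. Likewise, your explicit strictness observations -- distinct classes carry distinct weights, zero-weight voters are dummies, dummies are strictly lightest -- are exactly what make the breakpoint argument airtight, where the paper's forward direction uses only ``equal weights implies same class.'' What the paper's architecture buys in exchange is the stronger, reusable Lemma~\ref{IntRes}, including its horizontal-projection statement; your argument recovers the intersection criterion (any face meeting the footprint contains $\sigma_0$, and any face containing $\sigma_0$ meets it) but not that projection property.
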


The theorem implies that the power composition of a weighted game $v$ can be obtained directly from its polytope $P_v$.   While the converse is untrue, the power composition does tell us precisely which subsimplices of $\Delta_n$ intersect the polytope, namely those which contain $\sigma_v$.  

\begin{example}
Consider the weighted game $v=\langle 31 \rangle$ from Example~\ref{ex:31}. The vertices of the polytope $P_v$ are $(1: 1,0,0),\left(\frac{2}{3} : \frac{1}{3}, \frac{1}{3}, \frac{1}{3}\right), \left(\frac{1}{2} : \frac{1}{2}, \frac{1}{4}, \frac{1}{4}\right),$ and $\left(\frac{1}{2} : \frac{1}{2}, \frac{1}{2}, 0 \right)$.  Its footprint is the interior of the triangle with vertices $p_1, p_2$ and $p_3$, together with the segment from $p_1$ to $p_3$.  Therefore $\sigma_v= \overline{p_1 p_3}$ and thus by Theorem~\ref{thm:hierarchy} the power composition of $v$ is $(1,2)$.
\hfill{$\Diamond$}
\end{example}

To prove Theorem~\ref{thm:hierarchy}, we first need a definition and a lemma.  We obtain the \emph{extended power composition} $\epc (v)= (n_1, n_2, \hdots , n_k, n_o)$ of $n$ for $v$ by appending $n_0$, the number of dummies in $v$, to the power composition of $v$.  

\begin{lemma}{\label{IntRes}}
Let $v$ be a weighted game.  A subsimplex $\tau \subset \Delta_n$ intersects the footprint of $v$  if and only if the extended composition ($\eca_{\tau}$) associated to $\tau$ is a refinement of the extended power composition $\epc(v)$.  Furthermore, if $\tau$ intersects the footprint, then the horizontal projection of polytope $P_v$ onto the element $\epsilon_\tau:= (0,1] \times \tau \subset \C_n$ is contained in $P_v$, i.e., it equals $P_v \cap \epsilon_\tau$.
\end{lemma}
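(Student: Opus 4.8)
The plan is to prove the two assertions in turn, and throughout the key input is the interaction between the weight ordering and the desirability ordering in a weighted game, namely that $w_i=w_j$ implies $\v{i}\sim\v{j}$ and that $w_i=0$ implies $\v{i}$ is a dummy. Write the extended power composition as $\overline{n}'=(n_1,\dots,n_k,n_0)$ and let $\sigma\subset\Delta_n$ denote the subsimplex whose extended composition is $\overline{n}'$. By the refinement criterion for simplex containment recorded just before the lemma, ``$\underline{m}'_\tau$ refines $\overline{n}'$'' is equivalent to ``$\tau\supseteq\sigma$,'' so I will instead prove that $\tau$ meets the footprint $\pi(P_v)$ if and only if $\tau\supseteq\sigma$.

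For the forward direction, suppose $\ww\in\tau\cap\pi(P_v)$ and let $\tau_0\subseteq\tau$ be the unique face containing $\ww$ in its relative interior, so the blocks of $\tau_0$ are exactly the maximal runs of equal coordinates of $\ww$ together with the trailing run of zeros. Since $\ww$ realizes $v$, the contrapositive of $w_i=w_j\Rightarrow\v{i}\sim\v{j}$ forces a strict drop in weight across every boundary between consecutive symmetry classes (including the boundary between the weakest nontrivial class and the dummies), while the fact that a voter of weight $0$ is a dummy places all zeros of $\ww$ inside the dummy class. Hence every block boundary of $\overline{n}'$ is a block boundary of $\tau_0$, so $\underline{m}'_{\tau_0}$ refines $\overline{n}'$ and $\tau\supseteq\tau_0\supseteq\sigma$. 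For the reverse direction it suffices to exhibit a single realization whose weight vector lies in $\sigma$: applying Lemma~\ref{EqualWeights} to the weighted game obtained by deleting the dummies and then re-extending by assigning each dummy weight $0$ produces a realization $\ww^{\ast}$ that is constant on each symmetry class and zero on the dummies, whence $\ww^{\ast}\in\sigma\subseteq\tau$ and $\tau$ meets the footprint.

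For the projection statement I will describe the horizontal retraction $\rho_\tau\colon\C_n\to\epsilon_\tau$ explicitly and show $\rho_\tau(P_v)\subseteq P_v$; since $\rho_\tau$ fixes $\epsilon_\tau$ pointwise, this immediately yields $\rho_\tau(P_v)=P_v\cap\epsilon_\tau$. The map fixes $q$ and, on each block of $\tau$, averages the corresponding weights. Because $\tau\supseteq\sigma$, the blocks of $\tau$ refine the symmetry classes, so each such averaging is an averaging of consecutive voters within one class and therefore preserves $W_v$ by the remark following Lemma~\ref{EqualWeights}; this disposes of every block except the trailing block of forced zeros. To meet the zero constraint I will exploit that a dummy is a null player, so that the winning status of a coalition depends only on its non-dummy members: each coalition's weight lies between the weight of its non-dummy part and that weight plus the total dummy weight $s_D$, and hence $s_D$ may be redistributed arbitrarily among the dummies without changing $W_v$. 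I will push the weight of the zeroed dummies onto the remaining dummies; the bound $s_D<w_{\min}$ for the weakest non-dummy weight $w_{\min}$—which holds in any realization, since a minimal winning coalition $A$ for the weakest non-dummy $\v{t}$ has $A\setminus\{\v{t}\}$ losing even after adjoining all dummies, forcing $s_D<w_{\v{t}}$—guarantees the redistributed weights still obey the ordering defining $\Delta_n$, so $\rho_\tau(q:\ww)$ is a genuine realization of $v$ in $\epsilon_\tau$.

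The main obstacle is exactly this dummy/zero-block bookkeeping: naive block-averaging fails to land in $\tau$ whenever $\tau$ forces some dummies to weight $0$, and one must simultaneously conserve the total weight, preserve the coordinate ordering, and keep the map a retraction onto $\epsilon_\tau$. The two structural facts that make this work—that dummy weight is ``free'' because dummies are null, and that the total dummy weight is strictly less than the weakest non-dummy weight—are the crux of the argument, and checking that the redistribution yields an order-preserving, $W_v$-preserving retraction is where the care is required.
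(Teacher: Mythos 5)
Your proof of the first (intersection) statement is correct, and in both directions it is more careful than the paper's own argument. The forward direction, via the face $\tau_0$ and the two facts ``equal weights $\Rightarrow$ equally desirable'' and ``weight $0$ $\Rightarrow$ dummy,'' is sound. Your reverse direction takes a genuinely different route from the paper: the paper applies Lemma~\ref{EqualWeights} directly to $v$ and then perturbs inside $P_v$, but that realization may give the dummies a common \emph{positive} weight, and a small perturbation can never force those coordinates to $0$; your delete-the-dummies, apply Lemma~\ref{EqualWeights}, re-extend-by-zeros construction lands a realization exactly over $\sigma$ and thereby repairs that soft spot. (It does quietly use that the restricted game is weighted and has the same nontrivial symmetry classes as $v$, both of which are easy to check.)

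The gap is in the projection statement, and it sits precisely at the case your bookkeeping was built to handle. Your retraction $\rho_\tau$ zeroes the trailing block $Z$ of dummies and pushes their weight ``onto the remaining dummies.'' But when $m_0=n_0>0$, i.e.\ when $\tau$ forces \emph{every} dummy to weight $0$, there are no remaining dummies: this happens exactly for $\tau=\sigma$ (the most important instance of the lemma, used in Theorem~\ref{thm:hierarchy}), e.g.\ $v=\langle 32\rangle$ on three voters with $\tau=\{p_2\}$. In that case the displaced weight $s_Z$ has nowhere to go, $\rho_\tau(\ww)$ no longer sums to $1$, and the map does not land in $\Delta_n$ at all. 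Your key lemma---that $s_D$ may be redistributed arbitrarily \emph{among the dummies} without changing $W_v$---cannot close this case, since now weight must be moved onto non-dummies, which is a different claim needing a different estimate. The repair: after block-averaging, zero the block $Z$ and rescale every remaining coordinate by $1/(1-s_Z)$. This preserves block-constancy and the ordering (so it subsumes the case where some dummies remain, and your bound $s_D<w_{\min}$ becomes unnecessary); winning coalitions only gain weight; and a losing coalition $C$ stays losing because its new weight is $w_{C\setminus Z}/(1-s_Z)\le w_{C\setminus Z}+s_Z=w_{C\cup Z}<q$, where the first inequality uses $w_{C\cup Z}\le 1$ and the last uses that $C\cup Z$ is losing since $Z$ consists of null players. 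With that replacement your retraction argument, including the formal step $\rho_\tau(P_v)=P_v\cap\epsilon_\tau$, goes through.
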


\begin{proof}
Let $v$ be a weighted game on $n$ players and let $\tau$ be an arbitrary subsimplex of  $\Delta_n$ (of any dimension) given by vertices $p_{i_1},p_{i_2}, \hdots , p_{i_j}$. 
We first assume that $\tau$ intersects $\pi(P_v)$ and show that the extended composition $\eca_{\tau}$ associated to $\tau$ refines $\epc(v)$.  If $\tau$ intersects $\pi(P_v)$, then there is a realization $(q: \ww) \in P_v$ such that $\ww \in \tau$.  In this realization, the first $m_1:=i_1$ players have the same weights, the next $m_2:=i_2-i_1$ players have the same weights, and so forth.  Players with equal weights must lie in the same symmetry class.  This means the first $m_1$ players must lie in the same symmetry class in $v$, the next $m_2$ players must lie in the same symmetry class, which is possibly the same symmetry class as the first $m_1$ players, and so forth.  Therefore the extended composition $\eca_{\tau}=(m_1, m_2, \hdots m_j,m_0)$ associated to $\tau$ refines the extended power composition.

Now we prove the reverse implication.  We assume $\eca_{\tau}$ refines $\epc(v)$ and show that $\tau$ must intersect $\pi(P_v)$.  First, there exists a realization $z=(q:\ww)$ of $v$ for which all players in the same symmetry class have the same weight by Lemma~\ref{EqualWeights}.  Since the polytope is $n$-dimensional, we may travel a short distance away from $z$ (remaining inside $P_v$) along some vector which keeps the weights of players in the same part of $\eca_{\tau}$ the same but ensures that the weights of players in different parts of $\eca_{\tau}$ are different.  We arrive at a point $z' \in P_v$ lying above $\tau$.  Hence $\tau$ intersects the footprint of $v$, which establishes the first statement of the lemma.

Now we prove the second statement.  From the proof of Lemma~\ref{EqualWeights}, we concluded that if we replace the weights of any number of consecutive players in the same class by their average, we stay within $P_v$.  Since the quota remains fixed, this operation corresponds to a horizontal motion within polytope $P_v$ from an arbitrary realization of $v$ to a point on some element $\epsilon_\tau$ in the boundary of $\C_n$.   Thus the horizontal projection of $P_v$ onto $\epsilon_\tau$ is already inside $P_v$.
\end{proof}

\begin{proof}[Proof of Theorem~\ref{thm:hierarchy}.]
We first prove that the statement of the theorem is well-defined, that is, that there exists a unique smallest-dimensional subsimplex $\sigma \subset \Delta_n$ which intersects the footprint of $v$.  Consider two subsimplices $\sigma_1$ and $\sigma_2$ that both intersect $\pi(P_v)$.  The vertices $p_i$ which lie in both $\sigma_1$ and $\sigma_2$ determine the subsimplex $\sigma_1 \cap \sigma_2$.  Note that this intersection is necessarily nonempty, else all players become dummies since Lemma~\ref{IntRes} implies that the players' symmetry classes in $v$ are refined by both $\eca_{\sigma_1}$ and $\eca_{\sigma_2}$.  The composition associated to $\sigma_1 \cap \sigma_2$ is the common refinement of the compositions associated to $\sigma_1$ and $\sigma_2$ and respects the symmetry classes of the players in $v$.  By Lemma~\ref{IntRes}, $\sigma_1 \cap \sigma_2$ must intersect $\pi(P_v)$ as well.  Thus, there must exist a unique smallest subsimplex $\sigma$ which intersects $\pi(P_v)$.  Furthermore, $\sigma$ is contained in every subsimplex that intersects $\pi(P_v)$.

From Lemma~\ref{IntRes} we know that the extended composition $\eca_{\sigma}$ associated to $\sigma$ refines the extended power composition $\epc(v)$ of $v$.  We will prove that $\ca_{\sigma}= \gamma_v$.  Note that $\epc(v)$ is associated to some subsimplex $\tau$.  Lemma~\ref{IntRes} implies that $\tau$ intersects $P_v$.  Thus $\sigma \subseteq \tau$, which implies that the vertices of $\sigma$ are a subset of the vertices of $\tau$.  Thus, the extended composition $\epc(\tau)$ associated to $\tau$ refines the extended composition $\eca(\sigma)$ associated to $\sigma$.  Therefore $\epc(v)$ refines $\eca_{\sigma}$ and $\eca_{\sigma}$ refines $\epc(v)$.  So $\eca_{\sigma} = \epc(v)$ and the proof is complete.
\end{proof}

Having witnessed how geometry and power are intertwined, we next examine what happens at the extremes.  Geometrically, what does it mean for all voters have distinct powers?  Or to have the same power?  The two corollaries below respectively answer these questions.   The former requires the polytope to entirely avoid the boundary of $\C_n$ while the latter only occurs if the polytope lies above a vertex of the simplex $\Delta_n$ of normalized weights.

The first corollary describes what must occur in order for each player to have a distinct amount of power, i.e., each player lying in its own, distinct symmetry class.  This situation does not occur for games on $n \leq 4$ players.  Among all games on $n=5$ players, exactly one game imparts distinct powers to its players, namely the game $\langle 54, 531, 432 \rangle$.

\begin{corollary}
For a weighted game $v$, the following are equivalent.
\begin{enumerate}
\item Each player lies in its own, distinct symmetry class.
\item The power composition of $v$ is $(1^n)=(1,1, \hdots , 1)$.
\item The polytope $P_v$ is contained in the interior of $\C_n$.
\end{enumerate}
\end{corollary}

At the other extreme, in a \emph{symmetric game} (or \emph{collegium}), each nondummy player lies in the same symmetry class.   The winning coalitions of a symmetric game are precisely those containing at least $k$ out of the $n-n_0$ nondummy players; its power composition is $(n-n_0)$.  Each symmetric game with no dummies is denoted $\langle (k)(k-1) \cdots 21\rangle$, where $k$ is the number of players needed for a coalition to win.   There are $n$ symmetric games ($k=1, \ldots, n$) on $n$ players with no dummies, $n-1$ symmetric games with 1 dummy, $n-2$ symmetric games with 2 dummies, and so forth.   This sums to ${n+1 \choose 2}$ symmetric games on $n$ players; all symmetric games are weighted.  Of these, $\frac{n^2+2n}{4}$ are proper if $n$ is even, and $\frac{n^2+2n+1}{4}$ are proper if $n$ is odd.  

\begin{corollary}For a weighted game $v$, the following are equivalent.
\begin{enumerate}
\item The game is symmetric.
\item Each nondummy player lies the same symmetry class.
\item The power composition of $v$ has one part.  
\item The polytope $P_v$ lies above one vertex of $\Delta_n$
\end{enumerate}
\end{corollary}

Above point $p_j$ lie $j$ different symmetric games; each occupies quotas of length $1/j$ above $p_j$.

\begin{example} \label{ex:n=3}
Let us consider the 8 weighted games for $n=3$ players.  Of these, 6 are symmetric games.  Games $\langle 1 \rangle, \langle 21 \rangle, \langle 321 \rangle$ each have power composition $(3)$.  These are the only games which lie above the $0$-dimensional subsimplex $\{p_3\}$.  Representations $(q: p_3)$ lie in $\langle 1 \rangle$ for $q \in (0, 1/3]$, lie in $\langle 21 \rangle$ for $q \in (1/3, 2/3]$, and lie in $\langle 321 \rangle$ for $q \in (2/3, 1]$.

Games $\langle 2 \rangle$ and $\langle 32 \rangle$ each have power composition $(2)$.  These are the only games which lie above the $0$-dimensional subsimplex $\{p_2\}$.  Representations $(q: p_2)$ lie in $\langle 2 \rangle$ for $q \in (0, 1/2]$ and lie in $\langle 32 \rangle$ for $q \in (1/2, 1]$.

The only game in any $\C_n$ that lies above point $p_1$ is the dictator game $\langle n \rangle$.

The simplest nonsymmetric games occur for 3 players; they are $\langle 31 \rangle$ and its dual $\langle 3,21 \rangle$.  Each has power composition $(1,2)$, which is the composition associated with the one-dimensional subsimplex $\overline{p_1p_3} \subset \Delta_3$.  We depict the face $\overline{p_1p_3}$ of $\C_3$ and the polytopes which intersect it in Figure~\ref{fig:p1p3}.
 \hfill{$\Diamond$}
\end{example}

\begin{center}
\begin{figure}
\includegraphics[width=0.5\textwidth]{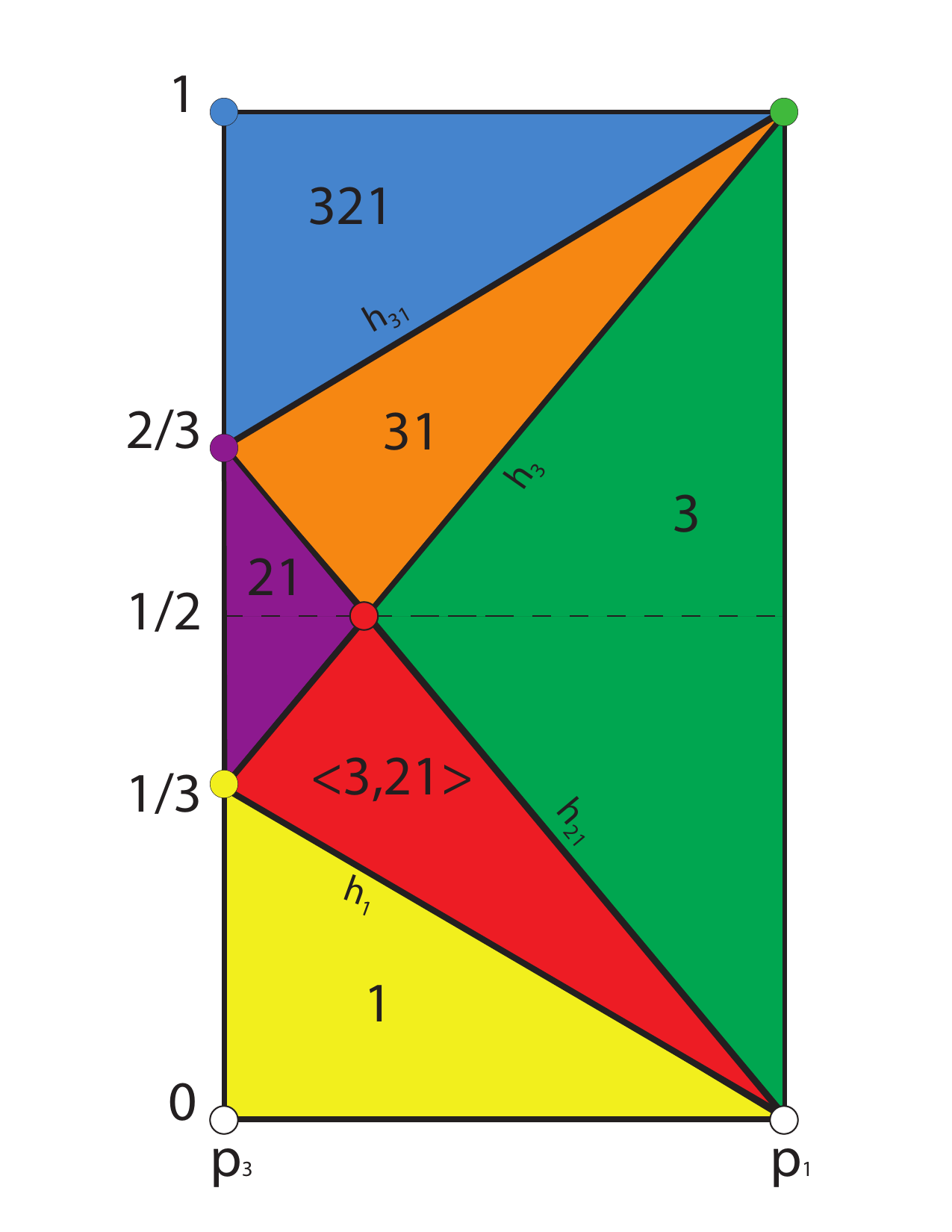}
\caption{From Example \ref{ex:n=3}, the face $\overline{p_1p_3}$ of $\C_3$ is shown along with all polytopes which intersect it (labeled by their corresponding game).  Notice (cf.\  Theorem~\ref{thm:dualgeom}) that the interiors of dual games are reflected about the hyperplane $q=1/2$.}
\label{fig:p1p3}
\end{figure}
\end{center}

\subsection{A geometric view of weighted voting posets}
Our last results demonstrate that the geometric viewpoint of weighted games via their polytopes is highly correlated to both hierarchies and the poset of weighted games.  In this section we prove that the polytopes in $\C_n$ are situated according to poset $\w_n$.  Furthermore, we show that the facets of $P_v$ correspond to covering relations in $\w_n$ and to the hierarchy of players in $v$.

\begin{definition}
A weight vector is called \emph{generic} if for all coalitions $A$ and $B$, we have $$w_A=w_B \iff A=B.$$
\end{definition}

\begin{theorem} \label{thm:satchain}
Given a generic weight vector $\ww$, consider the vertical line in $\C_n$ above $\ww$.  As the quota increases, the games traversed form a saturated chain in $\w_n$, the poset of weighted games.  Moreover, the chains
\begin{enumerate} 
\item are maximal:  each one begins with the game $\langle 1 \rangle$ (of unique minimal rank $1$) and finishes with consensus rule $\langle [n] \rangle$ (of unique maximal rank $2^n-1$); 
\item are \emph{self-dual}:  if game $v$ is in the chain, so too is $v^*$.
\end{enumerate}
\end{theorem}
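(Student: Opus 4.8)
The plan is to parametrize the vertical line above a generic $\ww$ by the quota $q$ and to track how the winning set changes as $q$ increases from $0^+$ to $1$. Since $\ww$ is generic, the $2^n$ coalition weights are distinct, so we may list all coalitions as $\emptyset = C_0, C_1, \ldots, C_{2^n-1} = N$ with $0 = w_{C_0} < w_{C_1} < \cdots < w_{C_{2^n-1}} = 1$. A coalition $A$ is winning at $(q:\ww)$ exactly when $q \le w_A$, so the line crosses the hyperplanes $h_{C_1}, \ldots, h_{C_{2^n-2}}$ one at a time, in this order, at the distinct quota values $q = w_{C_i}$. For $q \in (0, w_{C_1})$ every nonempty coalition wins, giving the game $\langle 1 \rangle$ of rank $1$; at $q = 1$ only $N$ wins, giving $\langle N \rangle$ of rank $2^n-1$. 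These are the unique games of minimal and maximal rank in $\w_n$, which yields maximality once the traversal is known to be a saturated chain.

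Next I would show that each crossing is a covering relation in $\w_n$. As $q$ increases past $w_{C_i}$, the single coalition $A := C_i$ flips from winning to losing, so the lower game $v$ and the upper game $v'$ satisfy $W_v = W_{v'} \cup \{A\}$. The essential point is that $A$ is a shift-minimal winning coalition (a generator) of $v$: if some $B \in W_v$ had $B < A$ in $M(n)$, then, since the order on $M(n)$ respects weights, we would have $w_B < w_A$, contradicting $B \in W_v = \{C : w_C \ge w_{C_i}\}$. Hence removing $A$ leaves a filter, so $v'$ covers $v$ in $J_n$; and since both $v$ and $v'$ are realized by $\ww$ (at quotas just below and just above $w_{C_i}$) they are weighted, making this a cover in $\w_n$. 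Because exactly one coalition flips at each of the $2^n-2$ crossings and the rank (the number of losing coalitions) increases by one each time, the traversed games form a saturated chain from $\langle 1 \rangle$ to $\langle N \rangle$. This gives part (1).

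For part (2), I would invoke Theorem~\ref{thm:dualgeom}, by which $\operatorname{int} P_{v^*}$ is the reflection of $\operatorname{int} P_{v}$ about $q=\tfrac12$. Fix a game $v$ in the chain; it is realized by $\ww$, so $\ww \in \pi(P_v)$. The boundary of the footprint $\pi(P_v)$ is covered by the images of the vertical facets of $P_v$, which lie over proper subsimplices of $\Delta_n$ on which some weights coincide. A generic $\ww$ has all $w_i$ distinct and $w_1 > 0$, hence avoids every such subsimplex, so $\ww \in \operatorname{int}\pi(P_v)$ and the vertical line meets $\operatorname{int} P_v$ for an open range of quotas. Choosing $(q:\ww) \in \operatorname{int} P_v$ and reflecting, $(1-q:\ww) \in \operatorname{int} P_{v^*}$ lies on the same vertical line, so $v^*$ is among the traversed games, proving self-duality.

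I expect the main obstacle to be the bookkeeping in the covering step: one must confirm simultaneously that genericity forces the crossings to occur one at a time, that the flipped coalition is a generator (so the winning set remains a filter and the step is a genuine cover rather than a jump of rank greater than one), and that every neighboring game satisfies the simple-game axioms ($N$ winning, $\emptyset$ losing) throughout $q \in (0,1]$. Once the order-compatibility between $M(n)$ and coalition weights is in hand, the self-dual statement follows cleanly from the reflection theorem, so the crux is really the careful verification that vertical motion realizes the cover relation of $\w_n$ exactly.
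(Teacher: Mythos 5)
Your proposal follows essentially the same route as the paper's proof: for part (1), genericity forces the hyperplanes $h_A$ to be crossed one at a time as $q$ increases, each crossing deletes exactly one coalition from the winning set, and realizability at quotas just below and just above the crossing makes both games weighted, hence each crossing is a cover in $\w_n$; your added verification that the flipped coalition is shift-minimal (so the new winning set is again a filter) is a detail the paper glosses over, and it is correct. For part (2), both you and the paper reduce to the reflection statement of Theorem~\ref{thm:dualgeom}.

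One supporting claim in your part (2), however, is false as stated: the boundary of the footprint $\pi(P_v)$ is \emph{not} covered by the projections of the vertical facets of $P_v$. Since $P_v$ is cut out by $\max_{B \notin W_v} w_B < q \le \min_{A \in W_v} w_A$, the footprint is $\bigl\{\ww \in \Delta_n : \max_{B \notin W_v} w_B < \min_{A \in W_v} w_A\bigr\}$, and its boundary also contains the projections of ridges where a top facet $h_A$ ($A$ winning) meets a bottom facet $h_B$ ($B$ losing). Concretely, for $v = \langle 21 \rangle$ in $\C_3$ the footprint is $\{w_2 + w_1 > w_3\}$, whose boundary contains points such as $(w_3,w_2,w_1)=(0.5,\,0.3,\,0.2)$ with all individual weights distinct and positive; no vertical facet lies over such points. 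Hence your deduction ``all $w_i$ distinct and $w_1 > 0$, therefore $\ww$ lies in the interior of $\pi(P_v)$'' does not follow from the stated premise. The conclusion you need is nevertheless true, and the repair is immediate once you invoke the full strength of the paper's genericity definition (all $2^n$ coalition sums $w_A$ are distinct, not merely the $n$ singleton weights): every boundary point of the footprint either satisfies $w_A = w_B$ for some winning/losing pair $A, B$, or lies on $\partial\Delta_n$ where two coalition sums of the form $w_{\{i\}}, w_{\{i+1\}}$ (or $w_{\{1\}}$ and $w_{\emptyset}=0$) agree; in all cases the point is non-generic. With that correction, a generic $\ww$ in the footprint is interior to it, the vertical line meets the interior of $P_v$, and your reflection argument completes part (2).
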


We note that not every saturated chain corresponds to a vertical line segment above a generic point.  Obstructions beyond self-duality exist, but these are not fully understood.  Every saturated chain does correspond to some piecewise-linear motion through polytopes, as we will see in Corollary~\ref{cor:pl}.  In Section~\ref{sec:future}, we discuss some approaches for studying saturated chains in the posets $\w_n$ and $J_n$.   

\begin{proof}
Given a generic weight vector $\ww$, all representations $(q:\ww)$ lie in game $\langle 1 \rangle$ for $q \in (0,w_1]$.  Similarly, for $q \in (1-w_1, 1]$ the representations lie in the consensus rule game $\langle [n] \rangle$.

Suppose we are at a representation $z$ in game $v$.  Moving upwards from $z$, we remain in $v$ until we encounter the next lowest hyperplane $h_A$ at $q=w_A$.  As we cross this hyperplane, coalition $A$ changes from winning to losing, which means we move into a game $u$ with winning coalitions $W_u = W_v \setminus \{A\}$.  This is precisely what it means to say that $u$ covers $v$ in $\w_n$.

Since the weight vector $\ww$ is generic, we will never encounter two hyperplanes at once (else $q=w_A=w_B$).  Thus, as we increase the quota, representation $(q:\ww)$ crosses, one by one, each of the $2^n - 2$ hyperplanes $h_A$ corresponding to nonempty coalitions other than $[n]$.  After crossing $h_A$, coalition $A$ switches from winning to losing, and we lie in a new game with rank one greater than before.  We continue until finally we arrive in the consensus rule game.  Thus our vertical line corresponds to a maximal saturated chain.  The duality of the chain follows immediately from Theorem~\ref{thm:dualgeom}.
\end{proof}

The hierarchy of $v$ and its position within the poset $\w_n$ determine which facets occur for the polytope $P_v$.

\begin{theorem}{\label{thm:facets}}
Let $v$ be a weighted game whose $n$ players form $k$ nontrivial symmetry classes.  Let $d$ represent the degree of $v$ in the (Hasse diagram of) poset $\w_n$.  Then, for $v \not= \langle [n] \rangle$ and $v \not=\langle 1 \rangle$, the polytope $P_v$ has $n-k+d$ facets.  (For the excluded games $v=\langle [n] \rangle$ and $v =\langle 1 \rangle$, the polytope $P_v$ has $n+1$ facets.)
\begin{enumerate}[\; 1. \;]
\item The top facets of polytope $P_v$ are in one-to-one correspondence with the weighted games $u_i$ that cover $v$ in $\w_n$, except in the case of consensus rule $v=\langle [n] \rangle$, which has one top facet $\{q=1\}$.  The facet is a subset of hyperplane $h_A$, where $A$ is the one coalition winning in $v$ but not $u_i$.  Coalition $A$ is  shift-minimal for $v$, i.e. it is a generator of the filter of winning coalitions for $v$ in $M(n)$.

\medskip
\item The bottom facets of polytope $P_v$ are in one-to-one correspondence with the weighted games $g_i$ that are covered by $v$ in $\w_n$, unless $v = \langle 1 \rangle$, which has one bottom facet $\{q=0\}$.  The facet is a subset of hyperplane $h_B$, where $B$ is the one coalition winning in $g_i$ but not $v$.  Coalition $B$ is a shift-maximal losing coalition for $v$, i.e., it is a generator of the order ideal of losing coalitions for $v$ in $M(n)$.

\medskip
\item There exist $n-k$ vertical facets in polytope $P_v$.  Each lies above a subsimplex of $\Delta_n$ given by $w_{i+1} = w_i$ or $w_1=0$.
\end{enumerate}
\end{theorem}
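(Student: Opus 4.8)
The plan is to count the three families of facets---top, bottom, and vertical---separately and add them. These families are geometrically disjoint: a top facet lies on a slanted hyperplane $h_A$ for a winning coalition $A$ and has its interior in $P_v$; a bottom facet lies on an $h_B$ for a losing $B$ and is disjoint from $P_v$; a vertical facet sits over a facet of $\Delta_n$ and has a horizontal normal. Since distinct coalitions determine distinct hyperplanes on the interior of $\C_n$, and since $P_v$ is convex (so each $h_A\cap\overline{P_v}$ is connected), every coalition contributes at most one facet; thus there is no double counting, and it suffices to enumerate each family.

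For the top facets I would first argue that only shift-minimal winning coalitions can bind: if $A\in W_v$ is not shift-minimal there is a winning $A'<A$, whence $h_{A'}$ lies strictly below $h_A$ on the interior and the constraint $q\le w_{A'}$ strictly dominates $q\le w_A$, so $\{q=w_A\}$ meets $P_v$ only in the boundary of $\C_n$ and supports no facet. For a shift-minimal winning $A$, the set $W_v\setminus\{A\}$ is again a filter, defining a linear game $u=\langle W_v\setminus\{A\}\rangle$; writing $R$ for the region cut out by the constraints common to $v$ and $u$, we have $P_v=R\cap\{q\le w_A\}$ and $P_u=R\cap\{q>w_A\}$. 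The crux is that $h_A$ is a genuine $(n-1)$-dimensional facet of $P_v$ \emph{exactly} when $u$ is weighted: if $u$ is weighted then $P_u$ is nonempty and $n$-dimensional by Proposition~\ref{ndim}, so $\{q=w_A\}$ slices the $n$-dimensional convex set $R$ into two full-dimensional pieces and the slice is $(n-1)$-dimensional; conversely, if $h_A\cap\overline{P_v}$ were a facet, then moving up from a generic relative-interior point crosses only $h_A$ (the local content of Theorem~\ref{thm:satchain}) and lands in $P_u$, forcing $u$ to be weighted. This is the step that makes the correspondence land on weighted covers rather than on all shift-minimal winning coalitions, and I expect it to be the main obstacle, since it is precisely what rules out phantom facets such as the one for the unweighted filter $\langle 8741\rangle$ above $\langle 987,8741\rangle$. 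As $A=W_v\setminus W_u$ is recovered from $u$, the map $u\mapsto h_A$ is a bijection between weighted covers and top facets. Part~2 then follows by applying part~1 to $v^*$ and reflecting through $q=1/2$ via Theorem~\ref{thm:dualgeom}: this exchanges top and bottom facets while reversing the order of $\w_n$, so the bottom facets of $P_v$ biject with the games covered by $v$, each on $h_B$ for the complementary shift-maximal losing coalition $B$.

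For the vertical facets I would work with the footprint $\pi(P_v)$ and the $n$ facets of $\Delta_n$, namely $\{w_1=0\}$ and $\{w_{i+1}=w_i\}$ for $1\le i\le n-1$. A vertical facet sits above such a facet $F$ precisely when $\pi(P_v)$ meets $F$ in codimension one, and Lemma~\ref{IntRes} guarantees that when this happens the slice $P_v\cap((0,1]\times F)$ is a genuine $(n-1)$-dimensional facet. It remains to decide which $F$ are met. If $\v{i}\sim\v{i+1}$, Lemma~\ref{EqualWeights} produces a realization with $w_i=w_{i+1}$, so $\{w_{i+1}=w_i\}$ is met; if instead $\v{i+1}\succ\v{i}$, a coalition witnessing strict desirability forces $w_{i+1}>w_i$ in every realization, so it is not. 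Likewise $\{w_1=0\}$ is met iff $\v{1}$ is a dummy. Counting: reading the symmetry classes from $\v{1}$ to $\v{n}$ as $c$ contiguous blocks, the number of same-class consecutive pairs is $(n-1)-(c-1)=n-c$. When $v$ has dummies the weakest block is the dummy class, so $c=k+1$ and $\{w_1=0\}$ is additionally met, giving $(n-c)+1=n-k$; when $v$ has no dummies, $c=k$, $\{w_1=0\}$ is not met, and the count is again $n-k$.

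Adding the three families yields $(\#\text{covers})+(\#\text{covered})+(n-k)=d+(n-k)$, the claimed $n-k+d$. The only caveat I would flag is the two extreme games: $\langle N\rangle$ has up-degree $0$ in $\w_n$ yet acquires the horizontal top facet $\{q=1\}$, and dually $\langle 1\rangle$ acquires $\{q=0\}$; each thus carries one facet beyond $n-k+d$ unless one regards these horizontal facets as the covers into the two trivial games excluded from $\w_n$ (cf.\ Remark~\ref{rmk:allwin}), under which reading the formula is uniform.
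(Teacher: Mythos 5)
Your proof is correct, and its core---the bijection between top facets and weighted covers---runs on the same engine as the paper's: one direction moves vertically up from a generic relative-interior point of a facet to realize the covering game, and the other uses convexity plus the full-dimensionality of both polytopes (Proposition~\ref{ndim}) to force an $(n-1)$-dimensional slice along $h_A$; your region $R$ of common constraints is a tidier packaging of exactly this argument, and your preliminary observation that non-shift-minimal winning coalitions are dominated (hence never bind) is implicit in the paper via Proposition~\ref{prop:covers}. You genuinely diverge in two places. For the bottom facets, the paper simply reruns the top-facet argument ``from the point of view of the greater coalition,'' whereas you reflect through $q=1/2$ via Theorem~\ref{thm:dualgeom}; this buys brevity, at the small cost of checking that the reflection exchanges top and bottom facets and reverses covering relations in $\w_n$, which you do note. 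Your vertical-facet count also matches the paper's, though you route the dimension claim through Lemma~\ref{IntRes} while the paper gets it from genericity of the averaged realizations of Lemma~\ref{EqualWeights}. Finally, your closing caveat is a legitimate catch rather than a defect: for $v=\langle N\rangle$ one has $k=1$ and $d=1$ (it covers $\langle n(n-1)\cdots 2\rangle$ and is covered by nothing), yet $P_v$ has $n+1$ facets, the extra one being $\{q=1\}$, and dually for $\langle 1\rangle$ with $\{q=0\}$. The paper absorbs these two games only through the ``unless'' clauses of parts 1 and 2 and calls the case ``immediate'' in the proof; your reading---counting the horizontal facet as the cover into the trivial game excluded by Remark~\ref{rmk:allwin}---is the right way to make the headline formula $n-k+d$ uniform.
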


\begin{proof}
We begin with the first statement.  The case of consensus rule, i.e., when $v = \langle [n] \rangle$, is immediate, since it has one top facet, one bottom facet $[n] \setminus 1$, and intersects all vertical facets, except $\{w_1=0\}$, so $n-1$ vertical facets.  Therefore we may assume $v \neq \langle [n] \rangle$.  Let $F$ be a top facet of $P_v$; we will describe the unique game corresponding to $F$.  There is a hyperplane $h_A$ which contains $F$; it is unique since hyperplanes arising from distinct coalitions have distinct normal vectors.  Consider a point $z$ in the interior of $F$; without loss of generality we may assume the weights in $z$ are generic.  Moving vertically from below $z$ to above $z$ changes coalition $A$ from winning to losing.  If we travel upwards by a small enough amount so as to not cross any other hyperplane, we ensure that no other coalition changes its status.  Thus the points immediately above $z$ lie in the game whose winning coalitions are $W_v \setminus \{A\}$; such a game covers $v$ in $\w_n$.

Now suppose we have a weighted game $u$ which covers $v$ in $\w_n$, i.e., $W_u = W_v \setminus \{A\}$.  The corresponding polytopes $P_u$ and $P_v$ lie on the same side of all other hyperplanes $h_B$ ($B \neq A$).  These polytopes are top-dimensional and distinct by Proposition~\ref{ndim}, so they must be separated by the hyperplane $h_A$.  We are guaranteed that $h_A$ intersects $P_v$ in a facet by the existence of generic points in their intersection.  Thus, the game $u$ corresponds to a unique facet of $P_v$.

Observe that the second statement in the theorem is merely the corresponding restatement of the first one from the point of view of the greater coalition rather than the lesser coalition.  The arguments above prove this statement as well.

Now we consider the third statement.  The region of allowable weights $\Delta_n$ is an $(n-1)$-dimensional simplex, so it has $n$ facets, which are given by equations $w_{i+1} = w_i$ ($1 \leq i \leq n-1$) and $w_1=0$. In proving Lemma~\ref{EqualWeights} we concluded that polytope $P_v$ contains points where any two consecutive players $\v{i}$ and $\v{i+1}$ in the same symmetry class have equal weights.  Further, these points may be chosen so that the weights are otherwise generic.  Indeed we get an $(n-1)$-dimensional set of such points, all of which lie in the interior of the corresponding vertical facet of $P_v$.

Thus, if players $\v{i}$ and $\v{i+1}$ lie in the same symmetry class, then $P_v$ contains a vertical facet over $w_i=w_{i+1}$; the converse is clearly also true.  Similarly, player $\v{1}$ is a dummy if and only if $P_v$ contains points over $w_1=0$.  For $k$ different nontrivial symmetry classes, there are $n-k$ of these players and hence $n-k$ vertical facets.

The degree $d$ of $v$ in $\w_n$ is equal to the number of covers of $v$ plus the number of weighted games covered by $v$.  Therefore $d$ is equal to the number of top facets plus the number of bottom facets.  These facets together with the $n-k$ vertical facets comprise the $n-k+d$ facets of the polytope $P_v$.
\end{proof}

This theorem tells us a great deal about how the structure of $\w_n$ arises in $\C_n$.

\begin{corollary} \label{cor:pl}
Every saturated chain of games in $\w_n$ may be achieved by some piecewise linear motion through $\C_n$.
\end{corollary}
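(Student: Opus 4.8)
The plan is to prove Corollary~\ref{cor:pl} by reducing the problem of realizing an \emph{arbitrary} saturated chain to the already-established Theorem~\ref{thm:satchain}, which handles the generic (vertical-line) case. The key observation is that a saturated chain $v_0 \lessdot v_1 \lessdot \cdots \lessdot v_m$ in $\w_n$ is a sequence of covering relations, and each covering relation $v_j \lessdot v_{j+1}$ has a concrete geometric meaning from Theorem~\ref{thm:facets}: the two polytopes $P_{v_j}$ and $P_{v_{j+1}}$ share a facet lying in a single hyperplane $h_{A}$, where $A$ is the coalition that is winning in $v_j$ but not $v_{j+1}$. My strategy is therefore to build the piecewise-linear path one segment per covering relation, always staying inside the union of consecutive polytopes and crossing exactly one shared facet at each step.

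First I would fix, for each game $v_j$ in the chain, a generic point $Q_j$ in the interior of $P_{v_j}$; such points exist and are dense since each $P_v$ is $n$-dimensional by Proposition~\ref{ndim}. Next, for each covering relation $v_j \lessdot v_{j+1}$, I would use Theorem~\ref{thm:facets}(1) to locate the shared facet $F_j \subset h_{A_j}$ separating $P_{v_j}$ from $P_{v_{j+1}}$, and pick a point $R_j$ in the relative interior of $F_j$. Because each polytope is convex (Proposition~\ref{prop:convex}), the straight segment from $Q_j$ to $R_j$ lies in $\overline{P_{v_j}}$ and the segment from $R_j$ to $Q_{j+1}$ lies in $\overline{P_{v_{j+1}}}$. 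Concatenating the segments
\[
Q_0 \to R_0 \to Q_1 \to R_1 \to \cdots \to R_{m-1} \to Q_m
\]
yields a piecewise-linear path in $\C_n$. The geometric content of crossing $F_j$ transversally is exactly that coalition $A_j$ switches from winning to losing, which is the statement that we pass from $v_j$ to $v_{j+1}$; hence the path visits the games of the chain in the prescribed order.

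The step I expect to require the most care is ensuring that each segment realizes \emph{only} the intended game (or the intended single cover) and does not stray through a third polytope, i.e.\ that the path actually traverses the chain rather than some refinement or detour of it. Since the $Q_j$ and $R_j$ are chosen in open interiors of $P_{v_j}$ and of the facet $F_j$ respectively, and since facets of distinct coalitions lie in hyperplanes with distinct normals, a sufficiently careful choice of the interior points keeps each open segment within one closed polytope and meets $h_{A_j}$ only at the single crossing point $R_j$; convexity prevents any segment from re-entering a polytope it has already left. One clean way to guarantee transversality is to take the $Q_j$ slightly perturbed so that each short sub-segment through $R_j$ is nearly vertical, thereby locally mimicking the vertical lines of Theorem~\ref{thm:satchain}, whose behavior at a single hyperplane crossing is already understood. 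This reduces the only genuinely delicate point back to the generic case and completes the construction.
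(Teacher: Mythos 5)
Your proposal is correct and takes essentially the same route as the paper, which states Corollary~\ref{cor:pl} as an immediate consequence of Theorem~\ref{thm:facets}: each covering relation in $\w_n$ corresponds to a facet shared by consecutive polytopes, so one concatenates segments from interior points through the relative interiors of those shared facets. Your final paragraph's concern about transversality and perturbation is unnecessary---since $Q_j$ and $R_j$ both lie in the convex set $P_{v_j}$ (top facets belong to the polytope) and the half-open segment from $R_j$ to $Q_{j+1}$ lies in $P_{v_{j+1}}$, each segment automatically stays in a single polytope and no third game is ever visited---but this does not affect the correctness of the argument.
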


\begin{remark}
Since every $n$-dimensional polytope has at least $n+1$ facets, we may immediately conclude that the degree $d$ of $v\in\w_n$ is greater than the number of symmetry classes $k$.  When $d=k+1$, polytope $P_v$ is a simplex (i.e. $P_v$ has $n+1$ facets).  We note that all games of $4$ or fewer players have a simplex as their polytope; so too do 101 out of 117 games with $5$ players.  The exceptions are the following 8 proper games and their duals: $\langle 541,5321 \rangle, \langle 541, 4321 \rangle,$ 
$\langle 541, 532, 4321 \rangle,$  
$\langle 531, 4321 \rangle, \langle 54, 531, 4321 \rangle,$ 
$\langle 521, 4321 \rangle,\langle 54, 521, 4321 \rangle, \langle 53, 521, 4321 \rangle$.
\hfill{$\Diamond$}
\end{remark}

An $n$-dimensional polytope is \emph{simple} if it has $n$ facets meeting at every vertex, e.g., a cube is simple; an octahedron is not.  Every simplex is simple.  However, not all polytopes $P_v$ are simple.  As an example, the polytope for game $v=\langle 521, 4321 \rangle$ on $n=5$ players has seven facets (two top, two bottom, three vertical); six of them meet at the vertex $\left(\tfrac{3}{5}: \tfrac{2}{5}, \tfrac{1}{5}, \tfrac{1}{5}, \tfrac{1}{5}, 0 \right)$.

\subsection{When unweighted games cover weighted games} \label{sec:unweightedmethod}
From Theorem~\ref{thm:facets}, we obtain a geometric understanding of the distinction between weighted and unweighted games.  After considering another corollary of this theorem, we detail a method for determining the weightedness of linear games which either cover or are covered by a weighted game in $J_n$.  

\begin{corollary} \label{cor:unweighted}
If a weighted game $v$ has $k$ more shift-minimal winning coalitions than top facets in its polytope, then $k$ of the games covering $v$ in $J_n$ are unweighted.  Similarly, if $v$ covers  $\ell$ more games in $J_n$ than it has bottom facets in its polytope, then $\ell$ of the covered games are unweighted.  
\end{corollary}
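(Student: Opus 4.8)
The plan is to read off both claims as bookkeeping, by comparing the exact number of games covering (resp.\ covered by) $v$ in $J_n$ against the facet counts supplied by Theorem~\ref{thm:facets}, and attributing the entire discrepancy to unweighted games. Fix a weighted game $v$, and let $g$ be its number of shift-minimal winning coalitions. By Proposition~\ref{prop:covers}, $v$ is covered in $J_n$ by exactly $g$ games, each of the form $W_u = W_v \setminus \{A\}$ for a generator $A$. Each such cover $u$ is either weighted or unweighted; I would split the $g$ covers along this dichotomy and count the weighted ones geometrically.

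The crux is the identification of the weighted covers in $J_n$ with the covers of $v$ in $\w_n$. By Definition~\ref{defn:wn}, the covering relation of $\w_n$ is simply the restriction to weighted games of the $J_n$ covering relation $W_v = W_u \cup \{A\}$; since every order relation in $\w_n$ is also one in $J_n$, no weighted game can be interposed between $v$ and such a $u$, so $u$ covers $v$ in $\w_n$ precisely when it is weighted and covers $v$ in $J_n$. (This uses only the covering relation and is independent of the inducement question of Conjecture~\ref{conj:induced}.) Now Theorem~\ref{thm:facets}(1) puts the covers of $v$ in $\w_n$ in bijection with the top facets of $P_v$. Writing $t$ for the number of top facets, exactly $t$ of the $g$ covers are weighted, hence exactly $g-t$ are unweighted. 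The hypothesis that $v$ has $k$ more shift-minimal winning coalitions than top facets is precisely $g-t=k$, which yields the first statement.

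The second statement follows dually, with shift-maximal losing coalitions playing the role of generators. The games covered by $v$ in $J_n$ are exactly those obtained by adjoining a single shift-maximal losing coalition $B$ to $W_v$, and by the same argument as above the weighted ones among them are exactly the games covered by $v$ in $\w_n$, which Theorem~\ref{thm:facets}(2) identifies with the bottom facets of $P_v$. Subtracting the number of bottom facets from the total number of games covered by $v$ in $J_n$ therefore counts the unweighted covered games, and the hypothesis names this difference $\ell$. The only delicate step, and the one I would state most carefully, is the correspondence in the middle paragraph: one must confirm that weightedness alone distinguishes the facet-bearing covers from the remaining $J_n$-covers, so that the facet count of $P_v$ is exactly the weighted part of a combinatorially determined total.
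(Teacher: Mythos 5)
Your proof is correct and takes essentially the same route as the paper's: both count the total number of $J_n$-covers of $v$ (respectively, games covered by $v$) via Proposition~\ref{prop:covers} and the shift-maximal losing coalitions, count the weighted ones via the facet bijections of Theorem~\ref{thm:facets}, and attribute the difference to unweighted games. Your middle paragraph makes explicit a point the paper leaves implicit---that the weighted $J_n$-covers of $v$ coincide with its covers in $\w_n$, independently of Conjecture~\ref{conj:induced}---which is a careful refinement rather than a different argument.
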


\begin{proof}
Theorem~\ref{thm:facets} states that every weighted game which covers a weighted game $v$ corresponds to a unique top facet of $P_v$.  Each cover of $v$ in $J_n$ is obtained by removing a shift-minimal winning coalition; see Proposition~\ref{prop:covers}.   Therefore, shift-minimal winning coalitions can be partitioned into two classes; those which correspond to a top facet and those which do not.  The $k$ shift-minimal winning coalitions which do not correspond to a top facet correspond to the $k$ unweighted games which cover $v$.

Similarly, the shift-maximal losing coalitions are in one-to-one correspondence with games in $J_n$ covered by $v$.  Thus, they can be partitioned into two classes; those which correspond to a bottom facet and those which do not.  Therefore the $\ell$ shift-maximal losing coalitions which do not correspond to a bottom facet correspond to the $\ell$ unweighted games covered by $v$.
\end{proof}

Geometrically, an unweighted game $u$ covers a weighted game $v$ in $J_n$ precisely when one generator fails to be an active constraint in defining the polytope.  

\begin{theorem} \label{thm:method}
Let $u$ be a linear game covering a weighted game $v$ in $J_n$ and assume $W_u = W_v \setminus \{ A \}$.  Then $u$ is weighted if and only if there exists a weight vector $\ww$ in the footprint $\pi(P_v)$ such that $w_A < w_B$ at $\ww$ for all other winning coalitions $B$ of $v$.  

Similarly, if a weighted game $u$ covers a linear game $v$ in $J_n$ in such a way that $W_u = W_v \setminus A$, then $v$ is weighted if and only if there exists a weight $\ww$ in the footprint $\pi(P_u)$  such that $w_A > w_C$ at $\ww$ for all other losing coalitions $C$ of $u$.  
\end{theorem}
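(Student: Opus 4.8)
The plan is to prove the first statement directly and then deduce the second from it by duality via Theorem~\ref{thm:dualgeom}. Throughout, the hypothesis $W_u = W_v\setminus\{A\}$ makes $A$ a shift-minimal winning coalition of $v$, the losing coalitions of $u$ are exactly those of $v$ together with $A$, and $\ww\in\pi(P_v)$ means precisely that some quota $q_0$ yields a realization $(q_0:\ww)$ of $v$.

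For the ``if'' direction I would argue by explicit construction. Suppose $\ww\in\pi(P_v)$ satisfies $w_A<w_B$ for every winning $B\neq A$ of $v$. Fix a quota $q_0$ with $(q_0:\ww)\in P_v$; since $A$ wins in $v$ we have $w_A\ge q_0$, while every losing coalition $C$ of $v$ satisfies $w_C<q_0\le w_A$. Hence at $\ww$ we have $w_C<w_A<w_B$ for all losing $C$ and all other winning $B$. Now choose any quota $q$ with $w_A<q\le\min_{B\in W_u}w_B$; this interval is nonempty because $w_A<w_B$ for each $B\in W_u=W_v\setminus\{A\}$. At $(q:\ww)$ both $A$ and every losing $C$ of $v$ fall strictly below $q$, while every $B\in W_u$ meets or exceeds $q$, so $(q:\ww)$ realizes $u$ and $u$ is weighted.

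The ``only if'' direction is where the real subtlety lies, and I expect it to be the main obstacle. The naive idea --- take any realization $(q_u:\ww)$ of $u$ and use its weight --- fails, because that weight need not lie in $\pi(P_v)$: a losing coalition $C$ of $v$ may satisfy $w_A\le w_C<q_u$, so $\ww$ cannot also realize $v$. The fix is to read the weight off the shared boundary of the two polytopes. Since $W_u=W_v\setminus\{A\}$ with both games weighted, $u$ covers $v$ in $\w_n$, so Theorem~\ref{thm:facets}(1) identifies this cover with a top facet $F$ of $P_v$ lying in the hyperplane $h_A$. I would take a point $(q:\ww)$ in the relative interior of $F$; it lies in $P_v$, so $\ww\in\pi(P_v)$, and on $F$ we have $q=w_A$. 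Because an interior point of $F$ avoids every other hyperplane $h_B$, each winning $B\neq A$ has $w_B\neq q$, and $B$ winning forces $w_B\ge q=w_A$; hence $w_A<w_B$ strictly, which is exactly the required weight.

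Finally, I would obtain the second statement by dualizing the first. If $u$ is weighted and covers the linear game $v$ with $W_u=W_v\setminus\{A\}$, passing to duals reverses the order, and one checks directly that $W_{v^*}=W_{u^*}\setminus\{A^c\}$ with $u^*$ weighted; thus the triple $(v^*,u^*,A^c)$ satisfies the hypotheses of the first statement (a linear game $v^*$ covering the weighted game $u^*$). The first statement then gives: $v^*$ (equivalently $v$) is weighted if and only if there is $\ww\in\pi(P_{u^*})$ with $w_{A^c}<w_B$ for every other winning coalition $B$ of $u^*$. By Theorem~\ref{thm:dualgeom} reflection in $q=0.5$ fixes the weights, so $\pi(P_{u^*})=\pi(P_u)$ as subsets of $\Delta_n$; and since $w_{A^c}=1-w_A$ while the complement bijection sends winning coalitions $B\neq A^c$ of $u^*$ to losing coalitions $C\neq A$ of $u$ with $w_B=1-w_C$, the inequality $w_{A^c}<w_B$ becomes $w_A>w_C$. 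This is exactly the second statement.
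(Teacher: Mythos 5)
Your proof of the first statement is essentially the paper's own proof: for the ``only if'' direction you invoke Theorem~\ref{thm:facets} to identify the cover $u \succ v$ in $\w_n$ with a top facet of $P_v$ inside $h_A$ and read the strict inequalities off a relative-interior point of that facet, and for the ``if'' direction you realize $u$ by raising the quota into the gap $\left(w_A, \min_{B \in W_u} w_B\right]$; your version is in fact slightly more careful than the paper's here, since you explicitly check that the losing coalitions of $v$ stay losing (via $w_C < q_0 \le w_A$). Where you genuinely diverge is the second statement: the paper simply declares its proof ``directly analogous'' (rerunning the facet argument with bottom facets and shift-maximal losing coalitions), whereas you deduce it from the first statement by dualizing, using $W_{v^*} = W_{u^*} \setminus \{A^c\}$ and the dictionary $w_{A^c} = 1 - w_A$. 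This is an attractive economy, and the combinatorial side of your dualization (the covering relation, the translation of the inequalities, preservation of weightedness and linearity under duals) is correct. The one step you must shore up is the claim that $\pi(P_{u^*}) = \pi(P_u)$ follows from Theorem~\ref{thm:dualgeom}: that theorem concerns only the \emph{interiors} of the polytopes, while the footprint is the projection of the full polytope, which can properly contain the projection of the interior (for instance, weights lying under a vertical facet, where two equally desirable voters have equal weight, belong to $\pi(P_u)$ but not to $\pi(\mathrm{int}\, P_u)$). The identity is nevertheless true, but it needs an extra half-line: given $(q:\ww) \in P_u$, replace $q$ by some $q'$ with $\max_{C \notin W_u} w_C < q' \le q$ chosen to avoid the finitely many values $w_T$; then $(q':\ww)$ still realizes $u$ and no coalition weight equals $q'$, so $(1-q':\ww)$ realizes $u^*$, giving $\ww \in \pi(P_{u^*})$, and symmetrically in the other direction. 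With that patch, your duality route to the second statement is complete.
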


We note that it suffices to check only the shift-minimal winning coalitions of $v$ in the first statement and only the shift-maximal losing coalitions of $u$ in the second.

\begin{proof}
Assume $u$ is weighted.  This means $u$ covers $v$ in $\w_n$, so by Theorem~\ref{thm:facets}, hyperplane $h_A$ forms a top facet of the polytope $P_v$.  We claim that for any point $z=(q:\ww)$ on the interior of this top facet, the weight $w_A$ must be less than the weight of all other winning coalitions in $v$.  Note that point $z$ is a realization of $v$.  If $w_B < w_A$ at $z$, where $q=w_A$, then the weight of $B$ is less than the quota, so $B$ is losing in $v$.  If $w_B = w_A$ at $z$, then the point $z$ is not actually on the interior of the facet; rather, $z$ lies on the face where $P_v$ intersects $h_A \cap h_B$.  Thus, the claim holds and we have proven one direction of the first statement.

To prove the other direction, assume there exists a weight $\ww \in \pi(P_v)$ where $w_A < w_B$ for all other winning coalitions $B$ of $v$.  Since $\ww$ lies in the footprint of $P_v$, there exists some realization $(q:\ww)$ of $v$.  Choosing a quota $q'$ greater than $w_A$ and less than the minimum of all weights $w_B$ for $B \in W_v \setminus \{A\}$ guarantees that we have realized the game whose winning coalitions are precisely $W_v \setminus \{A\}$, namely game $u$.  Thus we have shown $u$ is weighted, which finishes the proof of the first statement.

The proof of the second statement in the theorem is directly analogous.
\end{proof}

\begin{example} \label{ex:unweighted}
Consider the weighted game $v=\langle 987, 8741 \rangle$ mentioned in Section~\ref{sec:properties}.  It has two generators, but if $987$ becomes a losing coalition, then the unweighted linear game $u=\langle 8741 \rangle$ results.  By Theorem~\ref{thm:facets}, the polytope $P_v$ has only one top facet.
Though we know (by trade robustness) that $\langle 8741 \rangle$ is unweighted, we reprove it here using Theorem~\ref{thm:method}.  That is, we prove that there does not exist a weight $\ww$ satisfying $w_{987}<w_B$ for all other winning coalitions $B$ of $v$.

We argue that if the players' weights are restricted to lie in the footprint of $v$, then the generators of $v$ are actually comparable there:  we are going to prove that coalition $987$ is stronger than $8741$ among all weights in $\pi(P_v)$.  The weights of these two coalitions are equal along the set $S=\{\ww \, \left| \, w_9 = w_4 + w_1 \right. \}$.  Our strategy is to show $S$ is disjoint from $\pi(P_v)$; this implies that $w_{987}$ can never be equal to $w_{8741}$ in $\pi(P_v)$ and hence if one of these two coalitions is ever greater than the other in $\pi(P_v)$ it will remain greater in $\pi(P_v)$.    For any choice of weights $\ww$ in $S$, coalitions $9752$ and $75421$ must have the same weight.  The former is winning in $v$, while the latter is losing, so no weights from $S$ can form a represention of $v$.  Thus, $987$ and $8741$ are  comparable above the footprint $\pi(P_v)$ of $v$; the representation $(22:9,9,9,3,3,3,1,1,1)$ of $v$ shows that $987$ is stronger than $8741$ there.

Had we no \emph{a priori} knowledge of the weightedness of $\langle 8741 \rangle$, observe that the preceding paragraph would be sufficient to determine that it is unweighted.
\hfill{$\Diamond$}
\end{example}

We might ask, is Theorem~\ref{thm:method} a useful method for determining weightedness? Quite possibly, for the relevant games.  Understanding relevancy raises the question of determining how many games in $J_n$ either cover or are covered by a weighted game.   Though this footprint method still results in a linear programming (LP) problem, it is one that is different and possibly easier than the traditional LP problem of determining the existence of weights so that all generators are greater than all shift-maximal losing coalitions.  One slight drawback is the reliance upon knowledge of the poset $J_n$; while outputting this poset is computationally infeasible for more than a small number of players, obtaining local knowledge of how game $v$ sits inside $J_n$ is more straightforward.  A challenge for future work is to efficiently implement this as an algorithm for computation.


\section{Future directions}
\label{sec:future}

Our hope is that the combinatorial (poset) and geometric (polytope) approaches to linear games that we describe herein will lead to a greater understanding of linear games and weighted voting.  Many natural questions remain to be answered about these structures, some of which we have already mentioned (e.g., saturated chains, implementing the method of Theorem~\ref{thm:method}).

One direction for further study is the connection between the geometry of $\C_n$ and power distributions for weighted games.  In our upcoming paper \cite{gpi}, we define a geometrically-based, monotonic power index on all weighted games which has several useful properties.

Another avenue for further investigation is the classification of the maximal saturated chains.  Every saturated chain $\gamma$ in poset $J_n$ produces an ordering on all shift-minimal winning coalitions for the games in $\gamma$.  This is a linear ordering, except for the generators of the highest game $g$ in $\gamma$; these are strictly greater than all other generators but incomparable to each other.

For example, consider the following chain in the linear games poset $J_5^+$: 
$$\langle 54, 531 \rangle < \langle 54, 532 \rangle < \langle 541, 532 \rangle < \langle 532 \rangle < \langle 542, 5321 \rangle < \langle 543, 5321 \rangle.$$  
This chain places the following linear ordering on the generators that it contains:  
$$531 < 54 < 541 < 532 < 542 < 5321.$$  
Note that the first inequality implies $31<4$ whereas the last one implies $4<31$; no set of weights could possibly accomplish this.  Thus, this particular chain does not correspond to a vertical line within the weighted voting polygon.  We would like to understand which saturated chains in $\w_n$ correspond to vertical line segments in $\C_n$.

\begin{definition}
A saturated chain in $\w_n$ is said to be a \emph{weighted chain} if there exists a weight vector $\ww$ such that each game $g$ in the chain can be realized using these weights and some quota $q_g$. 
\end{definition}

Recall, a chain is said to be \emph{self-dual} if for every element $v$ in the chain, its dual is also in the chain (cf.\ Theorem~\ref{thm:satchain}).
Clearly any weighted chain must be self-dual, since duality in the polytope corresponds to reflection across the hyperplane given by $q=0.5$.

We conjecture a necessary and sufficient condition for a saturated chain to be a weighted chain.  This condition involves a property called \emph{inequality robustness} (which we will not define here) that is closely related to the notion of trade robustness.

\begin{conjecture} \label{conj:chain}
A saturated chain in $\w_n$ is a weighted chain if and only if the chain is inequality robust.
\end{conjecture}

Among proper games, saturated chains which are not weighted chains first arise with $5$ players.  For $\Pi_4$ (which equals $J_4^+$ and $\w_4^+$), there are 14 distinct maximal saturated chains, all of which are inequality robust and correspond to vertical line segments in $\C_4$ and thus are weighted chains.

\section*{Acknowledgements}
This project has its origins in a seminar taught by the second author.  One of the seminar students, Annalaissa Johnson, contributed greatly to our overall understanding of weighted voting  and power.  The authors are grateful to Curtis Greene for his Mathematica package ``Posets" which assisted with our computations and visualizations.  The authors would also like to thank an anonymous referee for suggesting the method used to prove Proposition~\ref{prop:induced}.

\bibliographystyle{plain}
\bibliography{GCVotingBib}

\pagebreak

\appendix
\section{Unweighted linear games with $6$ players} \label{app:unweighted6}

Table~\ref{n=6} lists all unweighted linear games with $6$ players in $J_6^+$.  The left-hand side lists 20 games with minimal rank $32$ in $J_6^+$, while the right-hand-side lists 20 games with rank greater than $32$.  The remainder of the unweighted linear games with $6$ players are obtained by taking the duals of the unweighted linear games on the right-hand-side.  We note that all 60 unweighted games in $J_6$ are improper.

\renewcommand{\arraystretch}{1.4}
\begin{table}[ht]
\begin{center}
\begin{tabular}{ l  c   l  c }
\toprule
Rank 32 games & \qquad & Higher rank games & Rank  \\
\cmidrule{1-1} \cmidrule{3-4} 
621, 542 &   & 621, 543, 5421 & 33 \\
    
621, 543, 5321 &  & 631, 542 & 33 \\
    
63, 5421 &   & 632, 541 & 33 \\
    
631, 541 &   & 64, 4321 & 33 \\
    
631, 542, 5321 &   & 64, 543, 5321 & 33 \\
    
632, 541, 5321 &   & 65, 542, 4321 & 33 \\
    
64, 542, 5321 &   & 65, 621, 543 & 33 \\
   
64, 543, 4321 &   & 65, 632, 5321 & 33 \\
   
64, 621, 543 &   & 65, 632, 543, 4321 & 33 \\
   
64, 631, 5321 &   & 65, 641, 543, 4321 & 33 \\
   
64, 632, 4321 &   & 65, 641, 632, 4321 & 33 \\

64, 632, 543, 5321 &   & 621, 543 & 34 \\  
 
641, 532 &   & 64, 5321 & 34 \\ 
  
65, 621, 543, 5421 &   & 65, 632, 4321 & 34 \\
   
65, 631, 4321 &   & 65, 641, 4321 & 34 \\
  
65, 631, 542 &   & 65, 642, 543, 4321 & 34 \\

65, 632, 541 &   & 65, 543, 4321 & 35 \\
65, 632, 542, 4321 &   & 65, 642, 4321 & 35 \\
  
65, 641, 542, 4321 &   & 65, 643, 4321  & 36 \\

65, 641, 632, 543, 4321 & \hspace*{1in}  & 65, 4321 & 37 \\
\bottomrule
\end{tabular}
\end{center}
\smallskip
\caption{This table lists, in terms of their generators, 40 of the 60 unweighted linear games with 6 players.  The remaining 20 are the dual games of the ones in the second column.}
{\label{n=6}}
\end{table}
\renewcommand{\arraystretch}{1}

\end{document}